\def\N{{\mathbb N}}
\def\R{{\mathbb R}}
\newtheorem{thm}{Theorem}[section]
\newtheorem{lem}[thm]{Lemma}
\newtheorem{prop}[thm]{Proposition}
\theoremstyle{definition}
\newtheorem{de}[thm]{Definition}
\theoremstyle{remark}
\newtheorem{rem}[thm]{Remark}
\numberwithin{equation}{section}
\begin{document}

\title[Invariant measures for stochastic Burgers equation]
{Invariant measures for stochastic Burgers equation in unbounded domains with space-time white noise}

\author{Zhenxin Liu}
\address{Z. Liu: School of Mathematical Sciences,
Dalian University of Technology, Dalian 116024, P. R. China}
\email{zxliu@dlut.edu.cn}

\author{Zhiyuan Shi}
\address{Z. Shi (Corresponding author): School of Mathematical Sciences,
Dalian University of Technology, Dalian 116024, P. R. China}
\email{zyshi0@outlook.com; shizhiyuan@mail.dlut.edu.cn }


\date{January 16, 2025}

\makeatletter
\@namedef{subjclassname@2020}{\textup{2020} Mathematics Subject Classification}
\makeatother

\subjclass[2020]{35Q35, 35R60, 60H15}

\keywords{stochastic Burgers equation; space-time white noise; the entire real line; mild solution; existence of invariant measures; }

\begin{abstract}
In this paper, we investigate the stochastic damped Burgers equation with multiplicative space-time white noise defined on the entire real line. We prove the existence and uniqueness of a mild solution of the stochastic damped Burgers equation in the weighted space and establish that the solution is bounded in probability.
  Furthermore, by using the Krylov-Bogolioubov theorem, we obtain the existence of invariant measures.

\end{abstract}

\maketitle

\section{Introduction}
\setcounter{equation}{0}
The generalized stochastic Burgers equation is given as:
\begin{equation}\label{sde}
\dfrac{\partial u(t,x)}{\partial t}=\dfrac{\partial^2 u(t,x)}{\partial x^2}+f(x,u(t,x))
+\dfrac{\partial g}{\partial x}(x,u(t,x))+\sigma(x,u(t,x))\dfrac{\partial^2 W}{\partial t\partial x},\quad t>0,
\end{equation}
where $f,g$ and $\sigma$ are measurable real functions and $W=\{W(t,x), t>0, x\in \R\}$
represents a Brownian sheet defined on a complete probability space $(\Omega, \mathcal{F}, P)$.
For any $t>0$, $\mathcal{F}_{t}$ denotes the $\sigma$-field generated by the family of random variables $W=\{W(s,x), s\in[0,t], x\in \R\}$.
The family of $\sigma$-fields $\{\mathcal{F}_{t},t>0\}$ constitutes a stochastic basis on the probability space $(\Omega, \mathcal{F}, P)$. If $g= \frac{1}{2} u^{2}$, it is known as a stochastic Burgers equation, which is used as a simple model for turbulence. Over the past few decades, the stochastic Burgers equation has found wide-ranging applications in various fields, including fluid dynamics, statistical physics, cosmology, etc. \par

The well-posedness of equation \eqref{sde} is a prerequisite for investigating the existence of invariant measures. Specifically, Gy\"{o}ngy \cite{f} examined the Dirichlet problem associated with equation \eqref{sde} with space-time white noise and assumed square growth for $g$. Gy\"{o}ngy and Rovira \cite{h} further improved the conditions on $g$ by allowing polynomial growth, while also considering a more regular noise within the interval [0,1]. This adjustment was necessary as higher-order polynomial growth of $g$ posed technical incompatibilities with the space-time white noise.
Moreover, Gy\"{o}ngy and Nualart \cite{g} studied the existence and uniqueness of a mild solution for equation \eqref{sde} with multiplicative space-time white noise in an unbounded domain. They established the existence and uniqueness of a local solution by applying the fixed-point theorem. Subsequently, necessary estimates for global existence were obtained by introducing an auxiliary function. In the following discussion, to investigate the existence of invariant measures, we consider the problem in a weighted space due to the absence of compact embedding of the usual Sobolev spaces in unbounded domains.
However, the aforementioned studies do not take into account the weighted space. Therefore, we establish the existence and uniqueness of solutions to equation \eqref{sde} with $f=-k|u|u$, $g=\frac{1}{2}u^2$ in weighted $L^2$ space.\par

Recently, numerous studies focus on investigating the existence and uniqueness of invariant measures for equation \eqref{sde}. On the one hand, in the case of bounded domains, the Dirichlet problem associated with equation \eqref{sde} was extensively studied in \cite{q}. When $f=0$ and $g= \frac{1}{2} u^{2}$, Da Prato et al \cite{t}, Da Prato and G\c{a}tarek \cite{af} successfully established the existence and uniqueness of invariant measures. E et al \cite{e} studied the nonviscous case and considered the convergence of the invariant measures when the viscosity coefficient tends to 0. Moreover, Da Prato and Zabczyk proposed a method, documented in \cite{n}, for proving the existence of invariant measures based on the Krylov-Bogolioubov theorem. One the other hand, for unbounded domains, when $f=0$ and $g=0$,
Tessitore and Zabczyk \cite{i} conducted a study on the stochastic heat equation with spatially homogeneous noise in $\R^{d}, d\geq3$. It is worth noting that in order to address the lack of compactness, they examined the problem in a weighted space. Eckmann and Hairer \cite{a} considered equation \eqref{sde} with $f=u(1-|u|^2)$ and $g=0$. The noise is additive and white in time, coloured in space. They obtained smooth solutions and proved the existence of invariant measures. Assing and Manthey \cite{d} extended the results of \cite{i} by proving the existence of invariant measures for the case of $g=0$ in $\R^{d}$. The Gaussian noise is white in time and white or colored in space.  Misiats et al \cite{b} investigated equation \eqref{sde} with $g=0$ in $\R^{d}$, $d\geq3$, and expanded upon the results of \cite{d}, which may not satisfy the aforementioned condition of $f$ but satisfy $|f(x,u)|\leq \phi(x)\in L^{1}(\R^d)\cap L^{\infty}(\R^d)$. The noise is white in time and colored in space. The required dissipativity does not come from the nonlinear function $f$ but from the decaying property of the Green's function in three and higher dimensions in $\R^{d}$. For the case of $f=-\alpha u, \alpha\geq 0$, and $g$ in \eqref{sde} with polynomial growth, Kim \cite{o} examined the Cauchy problem for the stochastic Burgers equation \eqref{sde} using a more regular random additive noise, which is only white in time. When $f=0$ and $g=\frac{1}{2}u^2$, Dunlap et al \cite{j} investigated the invariant measures on the entire real line for equation \eqref{sde}, where the equation is forced by the derivative of a Gaussian noise that is white in time and smooth in space. Motivated by the above work, we will focus on studying stochastic damped Burgers equation in unbounded domains.\par

Specifically, in this paper, we explore equation \eqref{sde} with $f=-k|u|u$ and $g=\frac{1}{2}u^2$ on the real line with multiplicative space-time white noise.  The nonlinear term $f=-k|u|u$ is usually called the damping term. It describes various physical situations such as porous media flow, drag or friction effects. Several studies have also investigated such damping term \cite{gao, rao, song}. Due to the presence of convective term, the existence and uniqueness of solutions in the weighted space are more challenging than in usual $L^2$ space. Therefore, we add the damping term to control $L^2_{\rho}$-norm of the solution. Our investigation reveals that the equation has a unique mild solution in the weighted $L^2$ space. Additionally, we prove that the solution is bounded in probability  under suitable conditions. By utilizing the Krylov-Bogolioubov theorem and compactness property,
we demonstrate the tightness of the family of laws $\mathscr{L}(u(t))$ in the weighted $L^{2}$ space, thereby establishing the existence of invariant measures for equation \eqref{sde}.

The paper is organized as follows. In Section 2, we present some fundamental definitions, assumptions, and main results. In Section 3, we prove the existence and uniqueness of solutions. Finally, in section 4, we outline several crucial lemmas and obtain the existence of invariant measures.
\section{Preliminaries and main results}
\subsection{Preliminaries}
\setcounter{equation}{0}
Denote $\Delta$ the Laplace operator in the unbounded domain $\R$ and let $S(t)$ represent the semigroup generated by $\Delta$ on
${L}^{2}(\R)$ or on some subspaces of ${L}^{2}(\R)$ . For any $t>0$ and $\phi \in {L}^{2}(\R)$,\\
$$ S(t)\phi= G(t,x)\ast\phi,$$
where $\ast$ denotes the convolution operator and $G(t,x)=\cfrac{1}{\sqrt{4\pi{t}}}\exp\Big(-\cfrac{\vert{x}\vert^{2}}{4t}\Big)$.

Let $\rho(x)\geq 1$  be a continuous function. Assume that there exists $C^*>0$ such that $|\rho'(x)|\leq C^* \rho(x)$ on $\R$, and that for all $T>0$, there exists $C(T)>0$ such that
\begin{equation}\label{Cr}
G(t,\cdot)\ast\rho\leq {C_\rho(T)\rho}, \quad \forall t\in[0,T].
\end{equation}
It is obvious that $\rho(x)=(1+|x|^2)^r, r\geq1$ and $\rho(x)={\exp(m|x|)}, m>0$  satisfy the above requirements.

Define the weighted $L^p$ space
\begin{align*}
 L^{p}_{\rho}(\R):=\left\{\omega:\R\to \R,\int_{\R}\vert\omega(x)\vert^{p}\rho(x)dx<\infty\right\}
\end{align*}
with the norm
$$\|\omega\|_{p,\rho}^{p}:=\int_{\R}\vert\omega(x)\vert^{p}\rho(x)dx.$$
Note that $L^p_{\rho}(\R)\subset L^q_{\rho}(\R)$ and $L^p_{\rho}(\R)\subset L^{\infty}(\R)$ for $0<p<q$.

Consider the following equation
\begin{equation}\label{2.1}
\dfrac{\partial u(t,x)}{\partial t}=\Delta u(t,x)-k|u(t,x)|u(t,x)
-\frac{1}{2}\dfrac{\partial u^2(t,x)}{\partial x}+\sigma(x,u(t,x))\dfrac{\partial^2 W}{\partial t\partial x},
\end{equation}
$t>0$, $x\in \R$ with $u(0,x)=u_{0}(x)$. We impose some assumptions on diffusion coefficient $\sigma$.
\begin{enumerate}
  \item[($A1$)] $\sigma$ is a measurable real function satisfying the following conditions:
\begin{equation}\nonumber
\vert \sigma(x,r)\vert\leq b(x),
\end{equation}
\begin{equation}\nonumber
\vert \sigma(x,r)-\sigma(x,s)\vert \leq L\vert r-s\vert,
\end{equation}
for all  $x,r,s\in \R$, and for constant $L$ and nonnegative function $b(x)\in L_{\rho}^2(\R)$.
\end{enumerate}

 \begin{de}
We say that an $L^2_{\rho}(\R)$-valued and $\mathcal{F}_t$-adapted stochastic process $u(t,x)$ is a mild solution to \eqref{2.1} if for all $t\geq 0$ and almost all $x\in \R$,
\begin{align*}
u(t,x)=&\int_{\R}G(t,x-y)u_{0}(y)dy-k\int_{0}^{t}\int_{\R}G(t-s,x-y)|u(s,y)|u(s,y)dyds\\
&-\frac{1}{2}\int_{0}^{t}\int_{\R}G(t-s,x-y)\frac{\partial (u^2(s,y)) }{\partial y}dyds\\
&+\int_{0}^{t}\int_{\R}G(t-s,x-y)\sigma(u(s,y))W(dy,ds)\quad a.s.
\end{align*}
\end{de}

\subsection{Main results}
\begin{thm}\label{solution}
Assume that $(A1)$ holds and $k\geq\frac{C^*}{3}+\delta$. If the initial condition $u_{0}$ belongs to $L^2_{\rho}(\R)$, then there is a unique mild solution $u(t,x)$ to equation \eqref{2.1}. Moreover, for any $T>0$, there exists a constant $C(T)>0$ such that  for $p\geq 2$
 \begin{equation}
E \sup\limits_{t\in[0,T]}\|u(t,x)\|^p_{2,\rho}\leq C(T)(1+\|u_{0}\|^p_{2,\rho}).
 \end{equation}
\end{thm}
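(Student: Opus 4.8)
The plan is to combine a truncation and fixed-point argument for local well-posedness with a weighted energy estimate that exploits the damping term to obtain global existence and the claimed moment bound. Write the mild formulation as $u=\mathcal{T}(u)$, where $\mathcal{T}(u)$ is the sum of the four terms in the definition: the linear evolution $S(t)u_0$, the damping drift $-k\int_0^t S(t-s)(|u|u)\,ds$, the convection drift $-\tfrac12\int_0^t S(t-s)\partial_y(u^2)\,ds$, and the stochastic convolution $Z(t)=\int_0^t\int_{\R}G(t-s,x-y)\sigma\,W(dy,ds)$. Since $|u|u$ and $u^2$ grow quadratically and are only locally Lipschitz, I would first replace them by globally Lipschitz truncations built from a smooth cutoff $\theta_N(\|u\|_{2,\rho})$, solve the truncated equation, and then remove the truncation once uniform-in-$N$ a priori bounds are available.

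For the local step I would show $\mathcal{T}$ is a contraction on the Banach space of $\mathcal{F}_t$-adapted $L^2_\rho(\R)$-valued processes with norm $\sup_{t\le T_0}(E\|u(t)\|_{2,\rho}^2)^{1/2}$ for $T_0$ small. The two terms requiring care are convection and noise. For the convection term I integrate by parts to move the spatial derivative onto the kernel, $\int_{\R}\partial_y G(t-s,x-y)u^2\,dy$, and use $\|\partial_x G(\tau,\cdot)\|_{L^1(\R)}\le C\tau^{-1/2}$, so the time singularity $\tau^{-1/2}$ remains integrable; together with the truncation this gives a contraction factor vanishing as $T_0\to0$. For the stochastic convolution I would use the It\^o isometry and \eqref{Cr}: writing $G(\tau,\cdot)^2 = c\,\tau^{-1/2} G(\tau/2,\cdot)$ and applying $G(\tau/2,\cdot)\ast\rho\le C_\rho(T)\rho$ gives
\[
E\|Z(t)\|_{2,\rho}^2 \le c\,C_\rho(T)\int_0^t (t-s)^{-1/2}\,ds\;\|b\|_{2,\rho}^2 \le C(T)\sqrt{t}\,\|b\|_{2,\rho}^2<\infty,
\]
which is finite precisely because the one-dimensional heat kernel has an integrable $L^2$-in-space singularity and $b\in L^2_\rho(\R)$. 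Uniqueness follows from the same estimates applied to the difference of two solutions.

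For global existence and the a priori bound I would derive the weighted energy identity for $\|u(t)\|_{2,\rho}^2=\int_{\R}u^2\rho\,dx$, justified rigorously through a Galerkin or mollified-noise approximation and passage to the limit. Under It\^o's formula the convection and damping drifts combine as follows: integration by parts gives $\langle -\tfrac12\partial_x(u^2),u\rangle_\rho=\tfrac13\int_{\R}u^3\rho'\,dx$, while the damping yields $\langle -k|u|u,u\rangle_\rho=-k\|u\|_{3,\rho}^3$; invoking $|\rho'|\le C^*\rho$ and $k\ge \tfrac{C^*}{3}+\delta$ shows their sum is bounded by $-(k-\tfrac{C^*}{3})\|u\|_{3,\rho}^3\le -\delta\|u\|_{3,\rho}^3\le 0$. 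Thus the damping absorbs exactly the worst term produced by the Burgers convection against the weight. The Laplacian contributes $-\|\partial_x u\|_{2,\rho}^2$ plus a cross term $-\int_{\R}\partial_x u\,u\,\rho'\,dx$ controlled by Young's inequality via $|\rho'|\le C^*\rho$, and the It\^o correction from the noise is bounded by $\|b\|_{2,\rho}^2$ up to the \eqref{Cr} constant. This produces a closed inequality of the form $\tfrac{d}{dt}E\|u\|_{2,\rho}^2\le C\,E\|u\|_{2,\rho}^2+C\|b\|_{2,\rho}^2$, and Gronwall's lemma gives a bound uniform in $N$, allowing the cutoff to be removed and yielding a global solution.

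Finally, for the $L^p$ moment bound with the supremum inside the expectation, I would raise the energy estimate to the power $p/2$, estimate the martingale (stochastic convolution) contribution by the Burkholder--Davis--Gundy inequality, and close the resulting inequality with Gronwall, delivering $E\sup_{t\in[0,T]}\|u(t)\|_{2,\rho}^p\le C(T)(1+\|u_0\|_{2,\rho}^p)$. The main obstacle throughout is the Burgers convection term in the weighted space: locally it carries a spatial derivative that must be tamed by the kernel's smoothing, and globally it generates the boundary-type term $\tfrac13\int_{\R}u^3\rho'\,dx$ that can only be dominated by the damping under the sharp threshold $k\ge\tfrac{C^*}{3}+\delta$; rigorously justifying the energy identity for the merely mild, non-smooth solution driven by space-time white noise is the accompanying technical difficulty.
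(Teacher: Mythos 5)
Your local-existence step (truncation plus a contraction argument for the mild formulation, with the convection term handled by moving the derivative onto the kernel via $\|\partial_x G(\tau,\cdot)\|_{L^1}\le C\tau^{-1/2}$, and the stochastic convolution controlled through the It\^o isometry and \eqref{Cr}) matches the paper's Lemma~\ref{lo} in all essentials. The genuine gap is in your global-existence step. You propose to apply It\^o's formula directly to $\|u(t)\|_{2,\rho}^2$ and assert that ``the It\^o correction from the noise is bounded by $\|b\|_{2,\rho}^2$ up to the \eqref{Cr} constant.'' For \emph{space-time white} noise this is false: the correction is formally $\sum_i\int_{\R}\sigma^2(x,u(t,x))e_i^2(x)\rho(x)\,dx$ over a complete orthonormal system $\{e_i\}$ of $L^2(\R)$, and it carries the divergent factor $\sum_i e_i^2(x)=+\infty$; equivalently, the quadratic variation of $t\mapsto u(t,x)$ at a fixed spatial point is infinite. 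No Galerkin or mollified-noise limit will deliver the closed inequality $\tfrac{d}{dt}E\|u\|_{2,\rho}^2\le C\,E\|u\|_{2,\rho}^2+C\|b\|_{2,\rho}^2$ that you write down, because the constant multiplying $\|b\|^2$ blows up as the regularization is removed. The same obstruction defeats your final step, where you raise ``the energy estimate'' to the power $p/2$ and apply Burkholder--Davis--Gundy to a martingale part of $\|u(t)\|_{2,\rho}^2$ that does not exist in this setting.

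The paper circumvents precisely this point: it sets $\eta(t)=\int_0^t\int_{\R}G(t-s,\cdot-y)\sigma(u(s,y))\,W(dy,ds)$, proves $E\sup_{t\le T}\|\eta(t)\|_{2,\rho}^p<\infty$ by the factorization method (Lemma~\ref{eta2}; note that a plain It\^o isometry, as in your proposal, only yields fixed-time moments, not the supremum in time), and then observes that $v=u-\eta$ solves a PDE with random coefficients but \emph{no} stochastic integral, so that multiplying by $v\rho$ and integrating in $x$ is a purely pathwise computation (Theorem~\ref{ve}). Your algebra for the drift terms --- $\langle -\tfrac12\partial_x(u^2),u\rangle_\rho=\tfrac13\int_{\R}u^3\rho'\,dx$, dominated by the damping $-k\|u\|_{3,\rho}^3$ under $k\ge \tfrac{C^*}{3}+\delta$ --- is correct and is exactly the mechanism the paper exploits, but it must be run on $v$ rather than on $u$, which produces the additional $\eta$-dependent terms collected in $R_1(\eta)$ and is why the moment bounds on $\eta$ are needed first. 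Inserting this subtraction of the stochastic convolution before the energy estimate repairs your argument; the remainder of your outline, including the stopping-time removal of the cutoff, then proceeds along the paper's lines.
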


Let ${B}_{b}(L^2_{\rho}(\R))$ be the space of all bounded measurable functions on $L^2_{\rho}(\R)$ and $C_{b}(L^2_{\rho}(\R))$ be the space of all continuous and bounded functions on $L^2_{\rho}(\R)$. Then the semigroup $P_t$ associated with the solution $u(t,x)$ of \eqref{2.1} is defined by
\begin{align*}
P_{t}\phi(x)=E\phi(u(t, x))=\int_{L^2_{\rho}(\R)}P_{t}(x,dy)\phi(y),
\end{align*}
for any $t\geq 0$ and $\phi \in {B}_{b}$.
Its dual operator $P_{t}^{*}$ acting on the space $\mathcal{P}(L^2_{\rho}(\R))$ of all probability measures on $L^2_{\rho}(\R)$ is defined by
\begin{align*}
P_{t}^{*}\mu (\Gamma)=\int_{L^2_{\rho}(\R)}P_{t}(x,\Gamma)\mu(dx),
\end{align*}
for any $t\geq 0$, $\Gamma\in\mathcal{B}(L_{\rho}^2(\R))$ and $\mu\in\mathcal{P}(L^2_{\rho}(\R))$.
\begin{de}
The transition semigroup $P_{t}$ is Feller, if $P_{t}: C_{b}(L^2_{\rho}(\R))\to C_{b}(L^2_{\rho}(\R))$ for $t>0$.
\end{de}

\begin{de}
A probability measure $\mu\in \mathcal{P}(L^2_{\rho}(\R))$ is called an invariant measure of $(P_{t})_{t\geq0}$, if and only if $P_{t}^* \mu=\mu $
for all $t\geq 0$.
\end{de}

By applying the Krylov-Bogolioubov theorem,
we investigate the existence of invariant measures in an unbounded domain. In Section 3, the choice of $\rho(x)$ is more flexible, while in Section 4, we choose $\rho(x)=e^{m|x|}$  and $\hat{\rho}(x)=e^{\hat{m}|x|}$, where $m,\hat{m}>0$.
\begin{thm}\label{55}
 Assume that condition $(A1)$ holds. $(P_{t})_{t\geq0}$ is the Feller semigroup in $L^2_{\rho}(\R)$ associated with the solution $u(t,x)$ of equation \eqref{2.1}. Given that $u_{0}$ in $L^2_{\rho}(\R) {\cap} L_{\hat{\rho}}^2(\R)$, then for suitable $\rho(x)$, $\hat{\rho}(x)$ and $k$, there exists an invariant measure for $(P_{t})_{t\geq0}$ on $L^2_{\rho}(\R)$.
\end{thm}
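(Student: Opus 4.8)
The plan is to realise the invariant measure as a limit of time-averaged laws via the Krylov--Bogolioubov theorem. Fix the datum $u_0\in L^2_\rho(\R)\cap L^2_{\hat\rho}(\R)$ provided by the hypothesis, write $u(t)$ for the corresponding mild solution from Theorem~\ref{solution}, and introduce the Cesàro averages
$$
\mu_T:=\frac1T\int_0^T P_t^*\delta_{u_0}\,\rmd t=\frac1T\int_0^T\mathscr{L}\big(u(t)\big)\,\rmd t\in\mathcal{P}\big(L^2_\rho(\R)\big),\qquad T\ge 1.
$$
Since $(P_t)_{t\ge0}$ is assumed Feller on $L^2_\rho(\R)$, it suffices to prove that $\{\mu_T\}_{T\ge1}$ is tight: Prokhorov's theorem then extracts a subsequence $\mu_{T_n}\rightharpoonup\mu$, and the usual Krylov--Bogolioubov computation (for $\phi\in C_b(L^2_\rho(\R))$ one writes $\int P_s\phi\,\rmd\mu_{T_n}-\int\phi\,\rmd\mu_{T_n}=\tfrac1{T_n}\int_{T_n}^{T_n+s}(P_t\phi)(u_0)\,\rmd t-\tfrac1{T_n}\int_0^{s}(P_t\phi)(u_0)\,\rmd t\to0$, uses the Feller property so that $P_s\phi\in C_b$, and passes to the limit) shows $P_s^*\mu=\mu$ for every $s\ge0$. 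Thus the whole argument reduces to tightness in the weighted space.

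First I would set up the compactness tool, which I expect to be one of the crucial lemmas of the section: a description of relatively compact subsets of $L^2_\rho(\R)$ on the whole line. The obstruction is twofold, and this is precisely why two weights enter. To recover compactness one needs (i) uniform smallness of the spatial tails and (ii) uniform local regularity. For (i) I would use the faster weight $\hat\rho=e^{\hat m|x|}$ with $\hat m>m$: since $\rho/\hat\rho=e^{-(\hat m-m)|x|}\to0$, a uniform bound on $\|\cdot\|_{2,\hat\rho}$ forces $\int_{|x|>R}|u|^2\rho\,\rmd x\le e^{-(\hat m-m)R}\|u\|_{2,\hat\rho}^2\to0$ uniformly. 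For (ii) I would use a bound on the weighted gradient $\|\partial_x u\|_{2,\rho}$, which on each bounded interval reduces to the Rellich embedding $H^1\hookrightarrow\hookrightarrow L^2$. Combining the two through a diagonal argument yields that each set $\{u:\|\partial_x u\|_{2,\rho}^2+\|u\|_{2,\hat\rho}^2\le M\}$ is relatively compact in $L^2_\rho(\R)$.

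With this criterion, tightness follows from Chebyshev's inequality once I have the uniform-in-$T$ bound
$$
\sup_{T\ge1}\frac1T\int_0^T E\Big(\|\partial_x u(t)\|_{2,\rho}^2+\|u(t)\|_{2,\hat\rho}^2\Big)\,\rmd t<\infty ,
$$
since this controls $\mu_T\big(\{u:\|\partial_x u\|_{2,\rho}^2+\|u\|_{2,\hat\rho}^2>M\}\big)$ uniformly in $T$. To produce it I would apply Itô's formula to $\|u(t)\|_{2,\rho}^2$ and to $\|u(t)\|_{2,\hat\rho}^2$. In each case the Laplacian yields the dissipative term $-\|\partial_x u\|^2$ plus a weight remainder governed by $C^*$; integration by parts turns the convective term $\tfrac12\partial_x(u^2)$ into $\tfrac13\int u^3\rho'$, bounded by $\tfrac{C^*}3\int|u|^3\rho$; the damping $-k|u|u$ supplies $-k\int|u|^3\rho$; and the quadratic variation of the stochastic integral is bounded, via $(A1)$, by $\|b\|_{2,\rho}^2$ (resp. $\|b\|_{2,\hat\rho}^2$). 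The two cubic contributions combine into $-(k-\tfrac{C^*}3)\int|u|^3\rho\le-\delta\int|u|^3\rho$ under the standing assumption $k\ge\tfrac{C^*}3+\delta$, exactly as in Theorem~\ref{solution}.

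The main obstacle is that these energy identities are not automatically dissipative. For the exponential weights of this section the heat semigroup is not a contraction on $L^2_\rho(\R)$ (indeed $G(t,\cdot)\ast e^{m|x|}\sim e^{m^2t}e^{m|x|}$), so the Laplacian's weight remainder leaves a destabilising term of order $\tfrac{(C^*)^2}{2}\|u\|_{2,\rho}^2$ that the cubic damping cannot absorb on the region where $|u|$ is small. Overcoming this is exactly where the phrase ``suitable $\rho,\hat\rho$ and $k$'' does its work: I would exploit the spatial localisation of the noise (the bound $b\in L^2_\rho(\R)$ forces $\sigma$ to decay) so that far out the dynamics is a forcing-free damped diffusion, and tune $m,\hat m$ small and $k$ large so that, after integrating in time and coupling the $\rho$- and $\hat\rho$-estimates, the cubic dissipation dominates the semigroup growth in the averaged sense required above. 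Closing this balance — rather than the essentially formal Krylov--Bogolioubov and compactness steps — is the technical heart of the argument, and it is what pins down the admissible ranges of $m$, $\hat m$ and $k$.
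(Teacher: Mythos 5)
Your overall architecture (Krylov--Bogolioubov plus Feller plus tightness of Ces\`aro averages, with the two weights $\rho=e^{m|x|}$, $\hat\rho=e^{\hat m|x|}$, $m<\hat m$, supplying tail control) matches the paper's. But the mechanism you propose for tightness has a genuine flaw, and it is exactly the point where the space-time white noise matters.

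You base compactness on sets $\{u:\|\partial_x u\|_{2,\rho}^2+\|u\|_{2,\hat\rho}^2\le M\}$ and want the bound $\sup_{T\ge1}\frac1T\int_0^T E\big(\|\partial_x u(t)\|_{2,\rho}^2+\|u(t)\|_{2,\hat\rho}^2\big)\,dt<\infty$ from an It\^o energy identity for $\|u(t)\|_{2,\rho}^2$. For equation \eqref{2.1} driven by $\partial^2 W/\partial t\partial x$ the solution has only the spatial regularity of the stochastic heat equation (H\"older $\tfrac12-$), so $u(t)\notin H^1_{\mathrm{loc}}$ almost surely and $E\|\partial_x u(t)\|_{2,\rho}^2=+\infty$; your compactness sets carry no mass under $\mathscr L(u(t))$. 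The same obstruction breaks the It\^o computation itself: applied directly to $\|u\|_{2,\rho}^2$, the quadratic-variation correction is $\sum_k\|\sigma(\cdot,u)e_k\|_{2,\rho}^2$ over an orthonormal basis of $L^2(\R)$, which diverges for a multiplication operator --- it is not the finite quantity $\|b\|_{2,\rho}^2$ you quote. This is precisely why the paper (i) never performs energy estimates on $u$ itself but on $v=u-\eta$ (Theorem \ref{ve}) and $v_\alpha=u-\eta_\alpha$ (Lemma \ref{y}), with the stochastic convolution controlled separately through the mild formulation (Lemmas \ref{eta2}, \ref{z}); and (ii) obtains compactness not from an a priori gradient bound but from the smoothing of the heat semigroup between the two weighted spaces: $S(1)$ and $\hat S(1)$ are Hilbert--Schmidt from $L^2_{\hat\rho}$ to $L^2_\rho$ (Propositions \ref{s}, \ref{ss}), so the factorization representation $u(1,u_0)=S(1)u_0-M_1k|u|u+\tfrac12\hat M_1u^2+\tfrac{\sin\alpha\pi}{\pi}M_\alpha Y_\alpha$ lands in the relatively compact set $K(r)$ with high probability (Lemmas \ref{Mal}, \ref{Yt}).

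Separately, you correctly identify that the exponential weight produces a destabilising term that the cubic damping cannot absorb where $|u|$ is small, but you leave its resolution open as ``the technical heart.'' The paper closes it with two specific devices you would need: the weighted Poincar\'e inequality $\|v_x\|_{2,\hat\rho}^2\ge\tilde C\|v\|_{2,\hat\rho}^2$ (Lemma \ref{poi}), which converts the retained fraction of the gradient term into genuine linear dissipation of $\|v_\alpha\|_{2,\hat\rho}^2$ under the smallness condition on $\hat m$ and largeness condition on $k$, and the $\alpha$-shifted Ornstein--Uhlenbeck decomposition with $C(\alpha)\to0$ as $\alpha\to\infty$ (Lemma \ref{z}), combined with the $\frac{d}{dt}\ln(\|v_\alpha\|_{2,\hat\rho}^2\vee R)$ device to get a time-averaged bound uniform in $T$. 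Without these, the admissible ranges of $m$, $\hat m$, $k$ are not pinned down and the tightness estimate is not established.
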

\section{Solution: existence and uniqueness}
In this section, we prove the existence and uniqueness of solutions.
We have the following estimate about $G(t,x-y)$ (see e.g. \cite{ag}). For any $m, n\in\N\cup\{0\}$, there exist some constants $K, C>0$ such that
\begin{equation}\label{G}
\Big|\dfrac{\partial^m}{\partial t^m} \frac{\partial^n}{\partial y^n} G(t, x-y)\Big|
\leq K t^{-\frac{1+2 m+n}{2}} \exp{\Big(-\frac{C|x-y|^2}{t}\Big)},
\end{equation}
for any $ t >0$ and $x, y \in \mathbb{R}$. Then based on inequality \eqref{G}, we can find positive constants $K, C_1, C_2, C_3$ such that
\begin{equation}
\Big|\frac{\partial G}{\partial t}(t, x-y)\Big|  \leq K t^{-3 / 2} \exp \Big(-C_1 \frac{|x-y|^2}{t}\Big),
\end{equation}

\begin{equation}\label{88}
\Big|\frac{\partial G}{\partial y}(t, x-y)\Big| \leq K t^{-1} \exp \Big(-C_2 \frac{|x-y|^2}{t}\Big),
\end{equation}

\begin{equation}
\Big|\frac{\partial^2 G}{\partial y \partial t}(t, x-y)\Big|  \leq K t^{-2} \exp \Big(-C_3 \frac{|x-y|^2}{t}\Big),
\end{equation}
for all $t>0$, $x,y\in \R$. Moreover, in the following proof, C will denote a generic constant that may be different from one
formula to another.
\begin{lem}\label{eta2}
Define $$(G\sigma)(t,x)=\int_0^t\int_{\R}G(t-s,x-y)\sigma(u(s,y))W(ds,dy),$$ for $t\in[0,T]$ and $x\in \R$. Then we have
\begin{align*}
 E \sup\limits_{t\in [0,T]}\|G\sigma(t)\|_{2,\rho}^p\leq  C(T,  \|b\|_{2,\rho}^p), \quad p> 4.
\end{align*}
\end{lem}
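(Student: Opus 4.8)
The plan is to treat this stochastic convolution by the factorization method of Da Prato and Zabczyk, which is the standard device for turning a fixed-time moment bound on $\|G\sigma(t)\|_{2,\rho}$ into a bound on the supremum over $[0,T]$. Fix $\alpha\in(0,1)$ to be chosen later, set $c_\alpha=\frac{\sin\pi\alpha}{\pi}$, and introduce the auxiliary process
\[
Y_\alpha(s,x)=\int_0^s\int_{\R}(s-r)^{-\alpha}G(s-r,x-y)\sigma(u(r,y))W(dr,dy).
\]
Using the Beta identity $\int_s^t(t-\tau)^{\alpha-1}(\tau-s)^{-\alpha}d\tau=\frac{\pi}{\sin\pi\alpha}$ together with a stochastic Fubini argument, one obtains the factorization
\[
G\sigma(t,x)=c_\alpha\int_0^t(t-s)^{\alpha-1}\big(S(t-s)Y_\alpha(s)\big)(x)\,ds.
\]
The argument then splits into a deterministic time-convolution estimate and a moment estimate for $Y_\alpha(s)$ in $L^2_\rho(\R)$.

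First I would record that $S(\tau)$ is bounded on $L^2_\rho(\R)$ uniformly for $\tau\in[0,T]$. Since $G(\tau,\cdot)$ is a probability density, Jensen's inequality gives $|(S(\tau)\omega)(x)|^2\le\int_{\R}G(\tau,x-y)|\omega(y)|^2dy$; integrating against $\rho$, using Fubini and the symmetry of $G$, and then invoking \eqref{Cr} in the form $\int_{\R}G(\tau,x-y)\rho(x)dx=(G(\tau,\cdot)\ast\rho)(y)\le C_\rho(T)\rho(y)$ yields $\|S(\tau)\omega\|_{2,\rho}^2\le C_\rho(T)\|\omega\|_{2,\rho}^2$. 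Consequently $\|G\sigma(t)\|_{2,\rho}\le C(T)\int_0^t(t-s)^{\alpha-1}\|Y_\alpha(s)\|_{2,\rho}\,ds$, and Hölder's inequality in $s$ with exponents $p$ and $p/(p-1)$ gives
\[
E\sup_{t\in[0,T]}\|G\sigma(t)\|_{2,\rho}^p\le C(T)\Big(\int_0^T\tau^{(\alpha-1)\frac{p}{p-1}}d\tau\Big)^{p-1}\int_0^T E\|Y_\alpha(s)\|_{2,\rho}^p\,ds,
\]
where the $\tau$-integral is finite precisely when $\alpha>1/p$.

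It then remains to show $E\|Y_\alpha(s)\|_{2,\rho}^p\le C(T)\|b\|_{2,\rho}^p$ uniformly in $s\in[0,T]$. By Minkowski's integral inequality (valid since $p/2\ge1$) applied to the measure $\rho(x)dx$,
\[
\big(E\|Y_\alpha(s)\|_{2,\rho}^p\big)^{2/p}\le\int_{\R}\big(E|Y_\alpha(s,x)|^p\big)^{2/p}\rho(x)dx.
\]
Because $Y_\alpha(s,x)$ is a stochastic integral against space-time white noise, the Burkholder--Davis--Gundy inequality bounds $E|Y_\alpha(s,x)|^p$ by the $p/2$ power of the quadratic variation $\int_0^s\int_{\R}(s-r)^{-2\alpha}G^2(s-r,x-y)\sigma^2(u(r,y))dy\,dr$; invoking $|\sigma|\le b$ from $(A1)$ dominates this by the deterministic $I(s,x)=\int_0^s\int_{\R}(s-r)^{-2\alpha}G^2(s-r,x-y)b^2(y)dy\,dr$, so $(E|Y_\alpha(s,x)|^p)^{2/p}\le C_p I(s,x)$. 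The key observation is that $G^2(\tau,\cdot)=\frac{1}{\sqrt{8\pi\tau}}G(\tau/2,\cdot)$ is a multiple of a heat kernel at time $\tau/2$; hence, after Fubini, \eqref{Cr} applies to $G(\tau/2,\cdot)\ast\rho$ and gives $\int_{\R}G^2(s-r,x-y)\rho(x)dx\le \frac{C_\rho(T)}{\sqrt{8\pi(s-r)}}\rho(y)$, whence
\[
\int_{\R}I(s,x)\rho(x)dx\le C(T)\|b\|_{2,\rho}^2\int_0^s(s-r)^{-2\alpha-1/2}dr,
\]
with the last integral finite exactly when $\alpha<1/4$. Collecting the constraints, I must choose $\alpha\in(1/p,1/4)$, which is possible if and only if $p>4$; this is precisely where the hypothesis enters and is the main obstacle, since the weighted stochastic integral carries only the critical $(s-r)^{-1/2}$ decay coming from the square of the heat kernel, leaving a narrow window in the exponent. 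Combining the three displays gives $E\sup_{t\in[0,T]}\|G\sigma(t)\|_{2,\rho}^p\le C(T,\|b\|_{2,\rho}^p)$, as claimed.
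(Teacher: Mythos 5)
Your proof is correct and follows essentially the same route as the paper: the Da Prato--Zabczyk factorization with $\alpha\in(1/p,1/4)$, Minkowski and the weight bound \eqref{Cr} for the deterministic convolution, and Burkholder plus $(A1)$ for the moment estimate of the auxiliary process. If anything, your version is slightly more careful than the paper's, since you explicitly invoke Minkowski's integral inequality to interchange the expectation with the spatial $\rho(x)\,dx$-integral before applying Burkholder pointwise, and you identify $G^2(\tau,\cdot)$ as a multiple of $G(\tau/2,\cdot)$ so that \eqref{Cr} applies.
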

\begin{proof}
Given $\alpha>0$, we can write
\begin{equation}
G\sigma(t,x)=\frac{ \sin \pi \alpha}{\pi}\int_0^t (t-\tau)^{\alpha-1}\int_{\R}G(t-\tau, x-z)Y(\tau, z)dzd\tau,
\end{equation}
where
 $$Y(\tau, z)=\int_{0}^{\tau}\int_{\R}(\tau-s)^{-\alpha}G(\tau-s,z-y)\sigma(u(s,y))W(ds,dy).$$
 Using Minkowski's inequality, for any $p>1$ and $\alpha>\frac{1}{p}$, we have
 \begin{align*}
 \|G\sigma(t)\|_{2,\rho}\leq &C\int_0^t  (t-\tau)^{\alpha-1}\left\|\int_{\R}G(t-\tau, x-z)Y(\tau, z)dz\right\|_{2,\rho}d\tau\\
 \leq &C\int_0^t  (t-\tau)^{\alpha-1} C^{\frac{1}{2}}_{\rho}(t)\|Y(\tau)\|_{2,\rho}d\tau\\
 \leq& C(T) \int_0^t  (t-\tau)^{\alpha-1}\|Y(\tau)\|_{2,\rho}d\tau\\
 \leq& C(T) \left(\int_0^t\|Y(\tau)\|_{2,\rho}^pd\tau\right)^{\frac{1}{p}}.
 \end{align*}
It follows that
 \begin{align*}
 \|G\sigma(t)\|_{2,\rho}^p\leq C(T) \int_0^t\|Y(\tau)\|_{2,\rho}^pd\tau.
 \end{align*}
 By Burkholder's inequality and condition (A1), we obtain
 \begin{align*}
 E\|Y(\tau)\|_{2,\rho}^p\leq&\left(\int_{\R}\int_0^\tau\int_{\R}(\tau-s)^{-2\alpha}G^2(\tau-s,z-y)\sigma^2(u(s,y))\rho(x)dydsdx\right)^\frac{p}{2}\\
 \leq& C(T, \|b\|_{2,\rho}^p)\left(\int_0^\tau(\tau-s)^{-2\alpha-1/2}ds\right)^{\frac{p}{2}}.
 \end{align*}
 Finally, for $\alpha<\frac{1}{4}$, we get
  \begin{align*}
 E \sup\limits_{t\in [0,T]}\|G\sigma(t)\|_{2,\rho}^p\leq &C(T, \|b\|_{2,\rho}^p) \int_0^T  \left(\int_0^\tau(\tau-s)^{-2\alpha-1/2}ds\right)^{\frac{p}{2}}d\tau\\
 \leq & C(T,  \|b\|_{2,\rho}^p).
 \end{align*}
 This completes the proof of Lemma \ref{eta2}.
\end{proof}
\subsection{Local existence and uniqueness}
Let ${B}(0,N)=\{u(t,x)\in L_{\rho}^2(\R): \|u(t)\|_{2,\rho}\leq N\}$ be the closed ball in $L_{\rho}^2(\R)$. Consider the mapping $\pi_N: L^2_{\rho}(\R)\to {B}(0,N)$ defined by
\begin{equation}
\pi_N(u)= \begin{cases}u, & \text { if }\|u\|_{2,\rho} \leq N, \\
 \frac{N}{\|u\|_{2,\rho}} u, & \text { if }\|u\|_{2,\rho}>N.\end{cases}
\end{equation}
Fix $N\in \mathbb{N}$, let us consider the equation
\begin{equation}\label{eN}
\begin{aligned}
\frac{\partial u^N(t, x)}{\partial t}= &\Delta u^N(t, x)-k|\pi_N u^N(t, x)|\pi_Nu^N(t,x)
-\frac{1}{2}\frac{\partial (\pi_N u^N(t, x))^2}{\partial x}\\
&+\sigma( \pi_N u^N(t,x))\frac{\partial ^2}{\partial t\partial x} W(t,x)
\end{aligned}
\end{equation}
with $u^N(0,x)=u_0(x)$.
\begin{lem}\label{lo}
Suppose that (A1) holds, $u_0\in L^2_{\rho}(\R)$ and $k\in \R$. Then for any fixed $N>0$, there is a unique mild solution $u^{N}(t,x)$ to equation \eqref{eN} such that $E(\sup\limits_{t\in[0,T]}\|u^N(t)\|_{2,\rho}^2)<\infty$.
\end{lem}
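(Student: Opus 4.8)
The plan is to realize $u^N$ as the unique fixed point of the mild formulation by the Banach contraction principle, the key point being that the cut-off $\pi_N$ turns every nonlinearity into a globally bounded and globally Lipschitz map on $L^2_\rho(\R)$, so that the sign and size of $k$ become irrelevant for local well-posedness. Fix $p>4$ and let $\mathcal H_T$ denote the Banach space of $\mathcal F_t$-adapted, $L^2_\rho(\R)$-valued processes $u$ with norm $\|u\|_{\mathcal H_T}:=\big(E\sup_{t\in[0,T]}\|u(t)\|_{2,\rho}^p\big)^{1/p}<\infty$. On $\mathcal H_T$ define
\begin{align*}
\Phi(u)(t,x)=&\int_{\R}G(t,x-y)u_0(y)\,dy-k\int_0^t\int_{\R}G(t-s,x-y)|\pi_N u(s,y)|\pi_N u(s,y)\,dy\,ds\\
&-\frac12\int_0^t\int_{\R}G(t-s,x-y)\frac{\partial(\pi_N u(s,y))^2}{\partial y}\,dy\,ds\\
&+\int_0^t\int_{\R}G(t-s,x-y)\sigma(\pi_N u(s,y))\,W(ds,dy).
\end{align*}
A fixed point of $\Phi$ is exactly a mild solution of \eqref{eN}, so it suffices to show that $\Phi$ maps $\mathcal H_T$ into itself and is a contraction.

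For the self-mapping property I would estimate the four terms separately. The deterministic drift term satisfies $\|S(t)u_0\|_{2,\rho}\le C_\rho(T)^{1/2}\|u_0\|_{2,\rho}$ by \eqref{Cr} (via Cauchy--Schwarz in the convolution and Fubini against $\rho$). For the damping term I use the embedding $L^2_\rho(\R)\subset L^\infty(\R)$ noted in Section 2 together with $\|\pi_N u\|_{2,\rho}\le N$ to get $\||\pi_N u|\pi_N u\|_{2,\rho}\le\|\pi_N u\|_\infty\|\pi_N u\|_{2,\rho}\le C_N$ uniformly, followed by \eqref{Cr} on the convolution. For the convective term I first integrate by parts to move $\partial_y$ onto the kernel, rewriting it as $-\tfrac12\int_0^t\int_{\R}\partial_yG(t-s,x-y)(\pi_N u)^2\,dy\,ds$, bound $\|(\pi_N u)^2\|_{2,\rho}\le C_N$ as above, and control the convolution with $\partial_y G$ by dominating $|\partial_y G(\tau,\cdot)|\le C\tau^{-1/2}\bar G(\tau,\cdot)$ for a Gaussian kernel $\bar G$ to which the weighted bound \eqref{Cr} still applies; the resulting singularity $\int_0^t(t-s)^{-1/2}\,ds$ is integrable. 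The stochastic convolution is handled directly by Lemma \ref{eta2}, whose bound depends only on $\|b\|_{2,\rho}$ through (A1). Collecting these estimates gives $\|\Phi(u)\|_{\mathcal H_T}<\infty$, so $\Phi(\mathcal H_T)\subset\mathcal H_T$.

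For the contraction I would derive the matching Lipschitz bounds. Using $|\,|r|r-|s|s\,|\le 2(|r|+|s|)|r-s|$ and the nonexpansiveness of the metric projection $\pi_N$ on the Hilbert space $L^2_\rho(\R)$, the damping and convective differences are bounded by $C_N\|u-v\|_{2,\rho}$ in $L^2_\rho$, e.g. $(\pi_N u)^2-(\pi_N v)^2=(\pi_N u+\pi_N v)(\pi_N u-\pi_N v)$ with $\|\pi_N u+\pi_N v\|_\infty\le C_N$; for the stochastic term I combine Burkholder's inequality with the Lipschitz bound on $\sigma$ from (A1) along the factorization used in Lemma \ref{eta2}, applied to $\sigma(\pi_N u)-\sigma(\pi_N v)$. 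Each contribution then carries a positive power of $T$ (from integrals such as $\int_0^T(T-s)^{-1/2}\,ds$), so there is $T_0=T_0(N)>0$, independent of $u_0$, with $\|\Phi(u)-\Phi(v)\|_{\mathcal H_{T_0}}\le\tfrac12\|u-v\|_{\mathcal H_{T_0}}$. The contraction principle yields a unique fixed point on $[0,T_0]$; since $T_0$ depends only on $N$ and not on the data, I iterate over the intervals $[jT_0,(j+1)T_0]$ and glue the pieces to obtain the unique solution on all of $[0,T]$. The required bound $E\sup_{t\in[0,T]}\|u^N(t)\|_{2,\rho}^2<\infty$ then follows from $\|u^N\|_{\mathcal H_T}<\infty$ and H\"older's inequality, since $p>4>2$.

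The main obstacle is the convective (Burgers) term: unlike the damping term it involves a spatial derivative, and after integration by parts the kernel $\partial_yG$ carries the borderline singularity recorded in \eqref{88}. The delicate points are (i) showing that $|\partial_yG(\tau,\cdot)|$ is dominated by $\tau^{-1/2}$ times a Gaussian for which the weighted convolution inequality \eqref{Cr} remains valid --- this is precisely where the structural hypotheses $|\rho'|\le C^*\rho$ and \eqref{Cr} on the weight are used --- and (ii) verifying that the resulting time singularity $(t-s)^{-1/2}$ is integrable and produces a genuine positive power of $T$ in the contraction constant. The stochastic term is comparatively routine given Lemma \ref{eta2}, and all remaining estimates reduce to the globally bounded and Lipschitz character of the truncated nonlinearities.
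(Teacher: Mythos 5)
Your proposal is correct and follows the same basic strategy as the paper: both realize $u^N$ as the unique fixed point of the truncated mild formulation, integrate the convective term by parts onto the kernel so that the bound \eqref{88} yields an integrable singularity, exploit that $\pi_N$ makes every nonlinearity globally bounded and Lipschitz (so the sign of $k$ is irrelevant), and delegate the stochastic convolution to the factorization estimate of Lemma \ref{eta2}. The only genuine difference is the contraction mechanism. The paper works in the Banach space with norm $\|u\|_{\mathcal H}^2=\int_0^T e^{-\lambda t}E\|u(t)\|_{2,\rho}^2\,dt$ and absorbs the singular kernels $(t-s)^{-3/4}$, $(t-s)^{-1/2}$ by taking $\lambda$ large, so that a single application of Banach's theorem covers all of $[0,T]$; the $E\sup_t\|\cdot\|_{2,\rho}^2$ bound is then obtained separately (their Step 1) by running the self-mapping estimate in the stronger sup norm. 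You instead work directly in the $E\sup_{t}\|\cdot\|_{2,\rho}^p$ norm with $p>4$, contract on a short interval $[0,T_0(N)]$, and iterate; this buys you the moment bound $E\sup_t\|u^N(t)\|_{2,\rho}^2<\infty$ as an automatic by-product of the fixed-point space, at the cost of the gluing step (which is harmless here since $T_0$ depends only on $N$). Both routes are standard and rely on the same two structural facts you correctly identify as the crux: the weighted convolution inequality \eqref{Cr} extended to the Gaussian dominating $|\partial_y G|$, and the integrability of the resulting time singularities. (Your use of the embedding $L^2_\rho(\R)\subset L^\infty(\R)$ to bound $\|\pi_N u\|_\infty$ is exactly the device the paper itself invokes, so it is not a gap relative to the paper's argument.)
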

\begin{proof}
The proof will be done in three steps.

{\bf Step 1.} Suppose that $u=\{u(t), t\in[0,T]\}$ is an $L^2_{\rho}(\R)$-valued, $\mathcal{F}_t$-adapted stochastic process. Denote
\begin{equation}\label{A1}
\begin{aligned}
\mathcal{A}u^N=&\int_{\R}G(t,x-y)u_0(y)dy-k\int_0^t\int_{\R}G(t-s,x-y)\left(|\pi_N u^N|\pi_N u^N\right)(s,y)dyds\\
&+\frac{1}{2}\int_0^t\int_{\R}\frac{\partial G}{\partial y}(t-s,x-y)(|\pi_N u^N|^2)(s,y)dyds\\
&+\int_0^t\int_{\R}G(t-s,x-y)\sigma(\pi_N u^N)(s,y)W(dy,ds)\\
=:& \mathcal{A}_1(t,x)+\mathcal{A}_2u^N(t,x)+\mathcal{A}_3u^N(t,x)+\mathcal{A}_4u^N(t,x).
\end{aligned}
\end{equation}
We will prove that
\begin{equation}\label{Au}
E\left(\sup\limits_{t\in[0,T]}\|\mathcal{A}u^N(t)\|_{2,\rho}^2\right)<\infty.
\end{equation}
By using H\"{o}lder's inequality, we have
\begin{align*}
 \|\mathcal{A}_1(t,x)\|^2_{2,\rho}&\leq \int_{\R}\left(\int_{\R}G(t,x-y)dy\right)\left(\int_{\R}G(t,x-y)|u_0(y)|^2dy\right)\rho(x)dx\\
 &=\int_{\R}\int_{\R}G(t,x-y)|u_0(y)|^2\rho(x)dxdy\\
 &\leq C_{\rho}(T) \|u_0\|_{2,\rho}^2.
\end{align*}
Then by Minkowski's inequality and H\"{o}lder's inequality, we have
\begin{align*}
  \|\mathcal{A}_2u^N(t,x)\|_{2,\rho}^2 & \leq k \int_0^t \left\|\int_{\R}G(t-s,x-y) (|\pi_N u^N(s,y)|\pi_N u^N(s,y))dy\right\|_{2,\rho}^2ds\\
 & \leq k\int_0^t \int_{\R}\int_{\R}G(t-s,x-y) | \pi_N u^N (s,y) |^4 \rho(x)dxdyds\\
 &\leq kC_{\rho}(T) \int_0^t \| \pi_N u^N (s)\|_{2, \rho}^4ds\\
 &\leq C(k,N,T)t.
\end{align*}
Combining Minkowski's inequality, H\"{o}lder's inequality and inequality \eqref{88}, we get
\begin{align*}
&\|\mathcal{A}_3u^N(t,x)\|^2_{2,\rho}\\
\leq&\frac{1}{2}\left(\int_0^t\left\|\int_{\R}\frac{\partial G}{\partial y}(t-s,x-y)|\pi_Nu^N(s,y)|^2dy\right\|_{2,\rho}ds\right)^2\\
\leq& \frac{1}{2}\left(\int_0^t\left(\int_{\R}\frac{\partial G}{\partial y}(t-s,x-y)dy\right)^{\frac{1}{2}}\left(\int_{\R}\int_{\R}\frac{\partial G}{\partial y}(t-s,x-y)|\pi_Nu^N(s,y)|^4\rho(x)dx dy\right)^{\frac{1}{2}}ds\right)^2\\
\leq& C(T) \left(\int_0^t(t-s)^{-\frac{3}{4}}N^2ds\right)^2\\
=&C(N,T)t.
\end{align*}
Then by Lemma \ref{eta2}, we have
\begin{align*}
E\sup\limits_{t\in[0,T]}\|\mathcal{A}_4u^N(t,x)\|_{2,\rho}^2\leq C(T,  \|b\|_{2,\rho}^2).
\end{align*}
Hence, \eqref{Au} holds.

{\bf Step 2.}
Fix $\lambda>0$. Let $\mathcal{H}$ denote the Banach space of $L^2_{\rho}(\R)$-valued and $\mathcal{F}_t$-adapted stochastic process $u=\{u(t,x),t\in[0,T]\}$ such that $u(0)=u_0$, with the norm
$$\|u\|_{\mathcal{H}}^2=\int_0^Te^{-\lambda t}E\|u(t)\|_{2,\rho}^2dt<\infty.$$
Define the operator $\mathcal{A}$ on $\mathcal{H}$ by \eqref{A1}. For any $u\in \mathcal{H}$, by estimate \eqref{Au}, we have
\begin{align*}
\|\mathcal{A}u^N\|_{\mathcal{H}}^2=\int_{0}^T e^{-\lambda t}E\|\mathcal{A}u^N\|_{2,\rho}^2dt\leq C\int_{0}^T e^{-\lambda t}dt<\infty.
\end{align*}
Therefore, $\mathcal{A}$ is an operator that maps the Banach space $\mathcal{H}$ into itself.

{\bf Step 3.}
The following  argument will demonstrate that $\mathcal{A}$  is a contraction mapping on $\mathcal{H}$. Let $u,v\in \mathcal{H}$. By Minkowski's inequality, we have
\begin{align*}
&\|\mathcal{A}_2(u^N)(t)-\mathcal{A}_2(v^N)(t)\|_{2,\rho}^2\\
\leq& k^2\left\|\int_0^t\int_{\R}G(t-s,x-y)\left||\pi_N u^N(s,y)|\pi_N u^N(s,y)-|\pi_N v^N(s,y)|\pi_N v^N(s,y)\right|dyds\right\|_{2,\rho}^2\\
\leq& k^2\int_0^t \left\|\int_{\R}G(t-s,x-y)|\pi_N u^N(s,y)-\pi_N v^N(s,y)|\left(|\pi_N u^N(s,y)|+|\pi_N v^N(s,y)|\right)dy\right\|_{2,\rho}^2ds\\
\leq& k^2\left(\|\pi_N u^N(s)\|^2_{2,\rho}+\|\pi_N v^N(s,y)\|^2_{2,\rho}\right) \int_0^t \int_{\R}\int_{\R}G(t-s,x-y)|\pi_N u^N(s,y)-\pi_N v^N(s,y)|^2\rho(x)dxdyds\\
\leq &C(k,N) \int_0^t \int_{\R}C_{\rho}(T/2)\rho(y)|\pi_N u^N(s,y)-\pi_N v^N(s,y)|^2dy ds\\
\leq& C(k,N,T) \int_0^t \|u^N(s)-v^N(s)\|_{2,\rho}^2 ds.
\end{align*}
Applying Minkowski's inequality and inequality \eqref{88}, we obtain
\begin{align*}
&\|\mathcal{A}_3(u^N)(t)-\mathcal{A}_3(v^N)(t)\|_{2,\rho}^2\\
\leq&\left\|\int_0^t\int_{\R}\frac{\partial G}{\partial y}(t-s,x-y)(|\pi_N u^N(s,y)|^2-|\pi_N v^N(s,y)|^2)dyds\right\|_{2,\rho}^2\\
\leq &\left(\int_0^t\left\|\int_{\R}\frac{\partial G}{\partial y}(t-s,x-y)(|\pi_N u^N(s,y)|^2-|\pi_N v^N(s,y)|^2)dy \right\|_{2,\rho}ds\right)^2\\
\leq&4N^2\left( \int_0^t (t-s)^{-\frac{1}{2}}\left(\int_{\R}\frac{\partial G}{\partial y}(t-s,x-y)\left(|\pi_N u^N(s,y)|-|\pi_N v^N(s,y)|\right)^2dy\rho(x)dx\right)^{\frac{3}{4}}ds\right)^2\\
\leq& 4N^2 \left(\int_0^t (t-s)^{-\frac{3}{4}}\sqrt{C_{\rho}(T)}\|u^N(s)-v^N(s)\|_{2,\rho}ds\right)^2\\
\leq &C(N,T)\int_0^t(t-s)^{-\frac{3}{4}}\|u^N(s)-v^N(s)\|_{2,\rho}^2 ds.
\end{align*}
Using condition $(A1)$ and Burkholder's inequality, we deduce
\begin{align*}
&E\|\mathcal{A}_4(u^N)(t)-\mathcal{A}_4(v^N)(t)\|_{2,\rho}^2\\
\leq&  E\int_{\R}\left|\int_0^t\int_{\R}G(t-s,x-y)L|\pi_N u^N(s,y)-\pi_Nv^N(s,y)|W(dy,ds)\right|^2\rho(x)dx\\
\leq& E \int_{\R} \int_0^t\int_{\R}G^2(t-s,x-y)L^2|\pi_Nu^N(s,y)-\pi_Nv^N(s,y)|^2dyds\rho(x) dx\\
\leq& L^2E \int_0^t  \int_{\R}\int_{\R}G^2(t-s,x-y) |u^N(s,y)-v^N(s,y)|^2\rho(x)dxdyds\\
\leq& C(L,T) \int_0^t(t-s)^{-\frac{1}{2}}E\|u^N(s)-v^N(s)\|_{2,\rho}^2ds.
\end{align*}
From the previous estimates, we have
\begin{align*}
&\|\mathcal{A}(u^N)(t)-\mathcal{A}(v^N)(t)\|_{\mathcal {H}}^2\\
&=\int_0^T e^{-\lambda t}E\|\mathcal{A}(u^N)(t)-\mathcal{A}(v^N)(t)\|_{2, \rho}^2dt\\
&\leq C(k,N,T,L)\int_0^T e^{-\lambda t} \left(\int_0^t \left(1+(t-s)^{-\frac{3}{4}}+(t-s)^{-\frac{1}{2}}\right)E\|u^N(s)-v^N(s)\|_{2,\rho}^2ds\right)dt\\
&\leq C(k,N,T,L)\int_0^T\int_s^T e^{-\lambda t}\left(1+(t-s)^{-\frac{3}{4}}+(t-s)^{-\frac{1}{2}}\right)  E\|u^N(s)-v^N(s)\|_{2,\rho}^2dtds\\
&\leq C(k,N,T,L)\int_0^T\left(\int_0^{\infty}e^{-\lambda y}\left(1+y^{-\frac{3}{4}}+y^{-\frac{1}{2}}\right)dy\right) e^{-\lambda s}E\|u^N(s)-v^N(s)\|_{2,\rho}^2ds\\
&=C(k,N,T,L) \left(\int_0^{\infty}e^{-\lambda y} \left(1+y^{-\frac{3}{4}}+y^{-\frac{1}{2}}\right)dy\right)\|u^N(s)-v^N(s)\|_{\mathcal{H}}^2.
\end{align*}
By choosing $\lambda$ sufficiently large such that
$$C(k,N,T,L)\int_0^{\infty}e^{-\lambda y} \left(1+y^{-\frac{3}{4}}+y^{-\frac{1}{2}}\right)dy<1,$$
the operator $\mathcal{A}$ is a contraction mapping on $\mathcal{H}$. Therefore, there exists a unique fixed point of $\mathcal{A}$, which implies the existence of a unique mild solution to equation \eqref{eN}.
\end{proof}
\subsection{Global existence and uniqueness.}
Lemma \ref{lo} implies the local existence and uniqueness of a solution to equation \eqref{2.1}. The global existence will be proved in the following argument.

We recall that the solution to the linear problem
\begin{equation}
\left\{\begin{array}{l}
\frac{\partial z}{\partial t}=\Delta z+\sigma(u)\frac{\partial ^2 W}{\partial x\partial t}, \\
z(0)=0
\end{array}\right.
\end{equation}
is unique and is given by the so-called stochastic convolution
\begin{equation}\label{eta}
\eta(t,x)=\int_0^t\int_{\R}G(t-s,x-y)\sigma(u(s,y))W(dy,ds).
\end{equation}
By Lemma \ref{eta2}, we get
\begin{equation}\label{eta3}
 E \sup\limits_{t\in [0,T]}\|\eta(t)\|_{2,\rho}^p\leq  C( T,  \|b\|_{2,\rho}^p), p\geq2.
\end{equation}
Let $v=u-\eta$. Then $v$ is the solution of the following equation:
\begin{equation}\label{vvv}
\frac{\partial v}{\partial t}=\Delta v-k|v+\eta|(v+\eta)-\frac{1}{2} \frac{\partial}{\partial x}\left(v+\eta\right)^2,
\end{equation}
with $v(0)=u_0\in L_{\rho}^2(\R)$.

\begin{thm}\label{ve}
If $v\in C([0,T], L^2_{\rho}(\R))$ is the mild solution of \eqref{vvv}, then
\begin{equation}\label{esti}
\|v(t)\|_{2,\rho}^2\leq\left(\|u_{0}\|_{2,\rho}^2+tR_1(\eta)\right)e^{t\left(\frac{2(C^*)^2}{3}+\frac{C^*}{2}+k\right)},
\end{equation}
where
$$R_1(\eta)=\left(\frac{4k}{3\varepsilon_2^2}+\frac{2C^*}{3\varepsilon_3^2}\right)
\sup\limits_{s\in[0,T]}\|\eta(s)\|_{3,\rho}^3+\left(\frac{1}{2}C^*+1+k\right)\sup\limits_{s\in[0,T]}\|\eta(s)\|_{4,\rho}^4
+\frac{4}{3\varepsilon_4^2}\sup\limits_{s\in[0,T]}\|\eta(s)\|_{6,\rho}^6.$$
\end{thm}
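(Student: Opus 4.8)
The plan is to prove \eqref{esti} by a weighted energy estimate for $\|v(t)\|_{2,\rho}^2$ followed by Gronwall's inequality, treating the stochastic convolution $\eta$ as a fixed, merely continuous (and in particular non-differentiable) function. First I would multiply \eqref{vvv} by $2v(t,x)\rho(x)$ and integrate over $\R$, obtaining
\[
\frac{d}{dt}\|v(t)\|_{2,\rho}^2 = 2\int_{\R}v\,\Delta v\,\rho\,dx - 2k\int_{\R}v\,|v+\eta|(v+\eta)\,\rho\,dx - \int_{\R}v\,\frac{\partial}{\partial x}(v+\eta)^2\,\rho\,dx .
\]
The organizing principle throughout is that every spatial derivative must be integrated by parts onto $v$ or onto $\rho$ and never allowed to land on $\eta$, since $\eta_x$ does not exist; the cost of moving a derivative onto $\rho$ is paid through the hypothesis $|\rho'|\leq C^*\rho$.

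For the diffusion term, integration by parts gives $\int v\,\Delta v\,\rho = -\int v_x^2\rho - \int v v_x\rho'$, and Young's inequality applied to $2C^*\int|v||v_x|\rho$ absorbs part of the good term $-2\int v_x^2\rho$, leaving a reserved negative multiple of $\int v_x^2\rho$ together with a quadratic contribution of total weight $\frac{2(C^*)^2}{3}$ on $\|v\|_{2,\rho}^2$. For the convective term I integrate by parts to write $-\frac12\int v\,\partial_x(v+\eta)^2\rho = \frac12\int (v_x\rho+v\rho')(v+\eta)^2\,dx$, expand $(v+\eta)^2 = v^2+2v\eta+\eta^2$, and split the pieces into two families. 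The pieces carrying $v_x$ are estimated by Young's inequality, their $v_x^2$ part being absorbed into the reserved diffusion term and the remainder producing a cubic term $\|v\|_{3,\rho}^3$ together with the conjugate contributions to $\|\eta\|_{4,\rho}^4$ and (via the product $v^2\eta^2$, split with parameter $\varepsilon_4$) to $\frac{4}{3\varepsilon_4^2}\|\eta\|_{6,\rho}^6$. The pieces carrying $\rho'$ are bounded directly by $C^*\int|v|(v+\eta)^2\rho$ and split into $\|v\|_{3,\rho}^3$, a quadratic term contributing $\frac{C^*}{2}$ to the coefficient of $\|v\|_{2,\rho}^2$, and the $\|\eta\|_{3,\rho}^3$ piece that, with parameter $\varepsilon_3$, yields exactly $\frac{2C^*}{3\varepsilon_3^2}\|\eta\|_{3,\rho}^3$.

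The heart of the argument is the damping term. Writing $v|v+\eta|(v+\eta) = v^2|v+\eta| + v\eta|v+\eta|$ and using the reverse triangle inequality $|v+\eta|\geq|v|-|\eta|$ together with $|v+\eta|\leq|v|+|\eta|$, I extract the coercive contribution $-k\|v\|_{3,\rho}^3$ plus cross terms of the shape $k\int v^2|\eta|\rho$ (with total weight $2k$) and $k\int|v|\eta^2\rho$. The $v^2|\eta|$ terms are split by Young's inequality with the small parameter $\varepsilon_2$, so that their cubic-in-$v$ part is a controllably small multiple of $\|v\|_{3,\rho}^3$ at the cost of $\frac{4k}{3\varepsilon_2^2}\|\eta\|_{3,\rho}^3$, while the $|v|\eta^2$ term is split into a quadratic $k\|v\|_{2,\rho}^2$ contribution and an $\|\eta\|_{4,\rho}^4$ piece. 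The decisive point is then to choose $\varepsilon_2,\varepsilon_3,\varepsilon_4$ small enough that the combined coefficient of the positive $\|v\|_{3,\rho}^3$ terms arising from the convective and damping cross terms does not exceed $k$, so that all of them are absorbed into $-k\|v\|_{3,\rho}^3$ and disappear. I expect this cubic bookkeeping to be the main obstacle: the cross terms of the damping \emph{themselves} regenerate $\|v\|_{3,\rho}^3$, so the coercivity furnished by $k$ must be apportioned delicately (through the $\varepsilon_i^{-2}$ weights that then appear in $R_1(\eta)$) rather than spent crudely, and this is precisely the place where the relation between $k$ and $C^*$ is used.

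Once the cubic terms cancel, every surviving $v$-dependent term is quadratic with total coefficient $\frac{2(C^*)^2}{3}+\frac{C^*}{2}+k$, and each $\eta$-dependent term is bounded, after replacing the running $\|\eta(s)\|_{p,\rho}^p$ by $\sup_{s\in[0,T]}\|\eta(s)\|_{p,\rho}^p$, by the corresponding summand of $R_1(\eta)$. This produces the differential inequality
\[
\frac{d}{dt}\|v(t)\|_{2,\rho}^2 \leq \Big(\tfrac{2(C^*)^2}{3}+\tfrac{C^*}{2}+k\Big)\|v(t)\|_{2,\rho}^2 + R_1(\eta),
\]
and Gronwall's inequality with $v(0)=u_0$, using $e^{a t}-1\leq a t\,e^{a t}$ to pass from the integral form to $(\|u_0\|_{2,\rho}^2+tR_1(\eta))e^{at}$, gives exactly \eqref{esti}. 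A final technical point to record is that the formal differentiation of $\|v(t)\|_{2,\rho}^2$ and the integrations by parts above are justified by a standard regularization of the mild solution (or by testing the mild formulation against $v\rho$), which is legitimate because all derivatives have been arranged to fall only on $v$ and $\rho$.
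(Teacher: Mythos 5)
Your proposal follows essentially the same route as the paper: multiply \eqref{vvv} by $v\rho$, integrate, push all spatial derivatives onto $v$ and $\rho$ (never onto $\eta$), extract $-k\|v\|_{3,\rho}^3$ from the damping term via the reverse triangle inequality, absorb every regenerated cubic term into it by choosing $\varepsilon_2,\varepsilon_3,\varepsilon_4$ small, and close with Gronwall. One step, however, is described in a way that would fail if taken literally: after expanding $(v+\eta)^2=v^2+2v\eta+\eta^2$, you propose to treat the $v_x$-carrying piece $\tfrac12\int v_x v^2\rho\,dx$ ``by Young's inequality.'' Young applied to the product $v_x\cdot v^2$ yields $\|v_x\|_{2,\rho}^2+\|v\|_{4,\rho}^4$, and the quartic term is not controlled by the damping, which only furnishes $-k\|v\|_{3,\rho}^3$. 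This piece must instead be recognized as an exact derivative, $\tfrac12\int v_xv^2\rho=-\tfrac16\int v^3\rho_x\le\tfrac{C^*}{6}\|v\|_{3,\rho}^3$; the paper achieves the same thing by splitting off $-\tfrac12\int\partial_x(|v|^2)v\,\rho=\tfrac13\int v^3\rho_x\le\tfrac{C^*}{3}\|v\|_{3,\rho}^3$ \emph{before} expanding, which also gives a slightly better pure-cubic coefficient ($\tfrac{C^*}{3}$ rather than the $\tfrac{C^*}{2}+\tfrac{C^*}{6}$ your grouping produces when the two $v^3\rho_x$ contributions are bounded separately), and this is what makes the threshold $k\ge\tfrac{C^*}{3}+\delta$ and the exact constants in $R_1(\eta)$ and in the exponential come out as stated. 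With that one term rerouted, your argument is the paper's.
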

 \begin{proof}
We can assume that $u_0$ and $\eta$ are regular. Multiplying \eqref{vvv} by $v\rho(x)$ and integrating over $\R$, we obtain
\begin{align*}
\frac{1}{2}\frac{d}{dt}\|v(t)\|_{2,\rho}^2=&\int_{\R}v(t)\Delta v(t) \rho(x)dx-k\int_{\R}|v(t)+\eta(t)|(v(t)+\eta(t))v(t)\rho(x)dx\\
&-\int_{\R}\frac{1}{2} \frac{\partial}{\partial x}\left(v(t)+\eta(t)\right)^2v(t)\rho(x)dx\\
=:&I_1+I_2+I_3.
\end{align*}
By Young's inequality, we get
\begin{equation}\label{fir}
\begin{aligned}
I_1=&-\int_{\R}v(t) v_{x}(t) \rho_{_{x}} dx-\int_{\R}|v_{x}(t)|^2\rho dx\\
\leq& C^*\int_{\R}|v_{x}(t)||v(t)|\rho dx-\int_{\R}|v_{x}(t)|^2\rho dx\\
\leq&\frac{C^*\varepsilon}{2}\int_{\R}|v_{x}(t)|^2\rho dx+\frac{C^*}{2\varepsilon}\int_{\R}|v(t)|^2\rho dx-\int_{\R}|v_{x}(t)|^2\rho dx.\\
=&(\frac{C^*\varepsilon}{2}-1)\|v_{x}(t)\|_{2,\rho}^2+\frac{C^*}{2\varepsilon}\|v(t)\|_{2,\rho}^2.
\end{aligned}
\end{equation}
For $I_2$, we have
\begin{equation}\label{I22}
\begin{aligned}
I_2=&-k\int_{\R}|v(t)+\eta(t)|v^2(t)\rho dx-k\int_{\R}|v(t)+\eta(t)|v(t)\eta(t)\rho dx\\
\leq&-k\int_{\R}(|v(t)|-|\eta(t)|)v^2(t)\rho dx+k\int_{\R}v^2(t)\eta(t)\rho dx+k\int_{\R}v(t)\eta^2(t)\rho dx\\
=&-k\int_{\R}|v(t)|^3\rho dx+2k\int_{\R}v^2(t)|\eta(t)|\rho dx+\frac{\varepsilon_1}{2}k\int_{\R}v^2(t)\rho dx+\frac{1}{2\varepsilon_1}k\int_{\R}\eta^4(t)\rho dx\\
\leq&-k\|v(t)\|_{3,\rho}^3+\frac{4k\varepsilon_2}{3}\|v(t)\|_{3,\rho}^3+\frac{2k}{3\varepsilon_2^2}\|\eta(t)\|_{3,\rho}^3+\frac{\varepsilon_1}{2}k\|v(t)\|_{2,\rho}^2
+\frac{1}{2\varepsilon_1}k\|\eta(t)\|_{4,\rho}^4.
\end{aligned}
\end{equation}
Applying the integration by parts formula, we obtain
\begin{align*}
I_3=&-\frac{1}{2}\int_{\R} \frac{\partial}{\partial x}(|v(t)+\eta(t)|^2-|v(t)|^2)v(t)\rho dx-\frac{1}{2}\int_{\R} \frac{\partial |v(t)|^2}{\partial x}v(t)\rho dx\\
=&\frac{1}{2}\int_{\R}(|v(t)+\eta(t)|^2-|v(t)|^2)v(t)\rho_{_{x}} dx+\frac{1}{2}\int_{\R}(|v(t)+\eta(t)|^2-|v(t)|^2)v_{x}(t)\rho dx\\&-\frac{1}{2}\int_{\R} \frac{\partial |v(t)|^2}{\partial x} v(t)\rho dx.
\end{align*}
For the first term on the right-hand side of the above equation, by H\"{o}lder's inequality and Young's inequality, we have
\begin{equation}\label{firs}
\begin{aligned}
&\frac{1}{2}\left|\int_{\R}\left(|v(t)+\eta(t)|^2-|v(t)|^2\right)v(t)\rho_{_{x}} dx\right|\\
\leq& \frac{1}{2}C^*\int_{\R}\left(2|v(t)|^2\eta(t)\rho+\eta^2(t)|v(t)|\rho\right) dx\\
\leq&C^*\int_{\R}|v(t)|^2\eta(t)\rho dx+\frac{1}{4} C^*\|\eta(t)\|_{4,\rho}^4+\frac{1}{4} C^*\|v(t)\|_{2,\rho}^2\\
\leq&\frac{2C^*\varepsilon_3}{3}\|v(t)\|_{3,\rho}^3+\frac{C^*}{3\varepsilon_3^2}\|\eta(t)\|_{3,\rho}^3+\frac{1}{4} C^*\|\eta(t)\|_{4,\rho}^4+\frac{1}{4} C^*\|v(t)\|_{2,\rho}^2 .
\end{aligned}
\end{equation}
For the second term, similarly, we have
\begin{equation}\label{seco}
\begin{aligned}
&\frac{1}{2}\left|\int_{\R}\left(|v(t)+\eta(t)|^2-|v(t)|^2\right)v_{x}(t)\rho dx\right|\\
\leq& \frac{1}{4}\int_{\R}(2|v(t)||\eta(t)|+\eta^2(t))^2\rho dx+\frac{1}{4}\|v_x(t)\|_{2,\rho}^2\\
\leq&2\int_{\R}v^2(t)\eta^2(t)\rho dx+\frac{1}{2}\|\eta(t)\|_{4,\rho}^4+\frac{1}{4}\|v_x(t)\|_{2,\rho}^2\\
\leq &\frac{4\varepsilon_4}{3}\|v(t)\|_{3,\rho}^3+\frac{2}{3\varepsilon_4^2}\|\eta(t)\|_{6,\rho}^6+\frac{1}{2}\|\eta(t)\|_{4,\rho}^4+\frac{1}{4}\|v_x(t)\|_{2,\rho}^2.
\end{aligned}
\end{equation}
For the third term, we have
\begin{equation}\label{thir}
\begin{aligned}
-\frac{1}{2}\int_{\R} \frac{\partial}{\partial x}|v(t)|^2v(t)\rho dx&=-\int_{\R}|v(t)|^2v_{x}(t)\rho dx\\
&\leq \frac{1}{3}C^* \int_{\R}|v(t)|^3\rho dx.
\end{aligned}
\end{equation}
Then, combining estimates \eqref{firs}, \eqref{seco} and \eqref{thir}, we get
\begin{equation}\label{thi}
\begin{aligned}
I_3\leq&\left(\frac{2C^*\varepsilon_3}{3}+\frac{4\varepsilon_4}{3}+\frac{1}{3}C^*\right) \|v(t)\|_{3,\rho}^3+\frac{1}{4}C^*\|v(t)\|_{2,\rho}^2+\frac{1}{4}\|v_{x}(t)\|_{2,\rho}^2\\
&+\frac{C^*}{3\varepsilon_3^2}\|\eta(t)\|_{3,\rho}^3+\left(\frac{1}{4}C^*+\frac{1}{2}\right)\|\eta(t)\|_{4,\rho}^4+\frac{2}{3\varepsilon_4^2}\|\eta(t)\|_{6,\rho}^6.
\end{aligned}
\end{equation}
Consequently, by inequalities \eqref{fir}, \eqref{I22} and \eqref{thi}, we have
\begin{equation}
\begin{aligned}
\frac{1}{2}\frac{d}{dt}\|v(t)\|_{2,\rho}^2\leq&\left(\frac{C^*\varepsilon}{2}-1+\frac{1}{4}\right)\|v_{x}(t)\|_{2,\rho}^2
+\left(\frac{C^*}{2\varepsilon}+\frac{\varepsilon_1}{2}k+\frac{1}{4}C^*\right)\|v(t)\|_{2,\rho}^2\\
&+\left(\frac{2C^*\varepsilon_3}{3}+\frac{4\varepsilon_4}{3}+\frac{1}{3}C^*+\frac{4k\varepsilon_2}{3}-k \right)\|v(t)\|_{3,\rho}^3\\
&+\left(\frac{1}{4}C^*+\frac{1}{2}+\frac{1}{2\varepsilon_1}k\right)\|\eta(t)\|_{4,\rho}^4+\left(\frac{2k}{3\varepsilon_2^2}+\frac{C^*}{3\varepsilon_3^2}\right)
\|\eta(t)\|_{3,\rho}^3+\frac{2}{3\varepsilon_4^2}\|\eta(t)\|_{6,\rho}^6.
\end{aligned}
\end{equation}
Let $\varepsilon=\frac{3}{2C^*}$, $\varepsilon_1=1$, $\delta=\frac{2C^*\varepsilon_3}{3}+\frac{4\varepsilon_4}{3}+\frac{4k\varepsilon_2}{3}$ and $k\geq \frac{C^*}{3}+\delta$. We have
\begin{align*}
\frac{d}{dt}\|v(t)\|_{2,\rho}^2\leq &\left(\frac{2(C^*)^2}{3}+k+\frac{1}{2}C^*\right)\|v(t)\|_{2,\rho}^2\\
&+\left(\frac{4k}{3\varepsilon_2^2}+\frac{2C^*}{3\varepsilon_3^2}\right)
\|\eta(t)\|_{3,\rho}^3+\left(\frac{1}{2}C^*+1+{k}\right)\|\eta(t)\|_{4,\rho}^4+\frac{4}{3\varepsilon_4^2}\|\eta(t)\|_{6,\rho}^6.
\end{align*}
 By Gronwall's inequality, we deduce
\begin{align*}
\|v(t)\|_{2,\rho}^2\leq& \left(\|u_{0}\|_{2,\rho}^2+\int_0^tR_1(\eta)ds\right)e^{t\left(\frac{2(C^*)^2}{3}+\frac{C^*}{2}+k\right)}\\
\leq &C\left(\|u_{0}\|_{2,\rho}^2+tR_1(\eta)\right),
\end{align*}
where
$$R_1(\eta)=\left(\frac{4k}{3\varepsilon_2^2}+\frac{2C^*}{3\varepsilon_3^2}\right)
\sup\limits_{s\in[0,T]}\|\eta(s)\|_{3,\rho}^3+\left(\frac{1}{2}C^*+1+k\right)\sup\limits_{s\in[0,T]}\|\eta(s)\|_{4,\rho}^4
+\frac{4}{3\varepsilon_4^2}\sup\limits_{s\in[0,T]}\|\eta(s)\|_{6,\rho}^6.$$
We have obtained a priori estimate for the regular $u_0$ and $\eta$, which, by taking the limit, leads to estimate \eqref{esti}.
\end{proof}

{\bf Proof of Theorem \ref{solution}.}

At first, we examine the uniqueness of the solution. Suppose that $u$ and $v$ are two solutions to equation \eqref{2.1}. For every natural  number $N$, we define the stopping time
$$
\sigma_N=\inf\{t\geq 0: \inf(\|u(t)\|_{2,\rho},\|v(t)\|_{2,\rho})\geq N\}\wedge T.
$$
Let $u_N(t)=u(t\wedge\sigma_N)$ and $v_N(t)=v(t\wedge\sigma_N)$ for all $t\in[0,T]$. Then $u_N(t)$ and $v_N(t)$ are solutions of equation \eqref{eN}. Hence $u_N(t)=v_N(t)$ a.s. for all $t\in [0,T]$. Taking the limit as $N\to \infty$, we conclude that $u(t)=v(t)$ a.s. for all $t\in [0,T]$.

To establish the existence of the solution for equation \eqref{2.1}, let $u^N$ be the solution to equation \eqref{eN} for any $N>0$. Consider the stopping time
$$
\tau_N=\inf\{t\geq 0: \|u(t)\|_{2,\rho}\geq N\}\wedge T.
$$
Notice that $u^M(t)=u^N(t)$ for $M\geq N$ and $t\leq \tau_N$. Therefore we can set $u(t)=u^N(t)$ if $t\leq \tau_N$ and then we have constructed a mild solution to equation \eqref{2.1} on the interval $[0,\tau_{\infty})$, where $\tau_{\infty}=\sup\limits_{N}\tau_N$. It remains to show that
$$
P(\tau_{\infty}=T)=1.
$$
Taking into account inequality \eqref{eta3} and inequality \eqref{esti}, we have
$$\sup\limits_{N}E(\sup\limits_{t\in[0,T]}\|u^N(t)\|_{2,\rho}^2)<\infty.$$
Since
\begin{equation}\label{tau}
\begin{aligned}
P(\tau_{N}<T)\leq &P\left(\sup\limits_{t\in[0,T]} \|u^N(t)\|_{2,\rho}^2\geq N^2\right)\\
\leq &\frac{E\left(\sup\limits_{t\in[0,T]}\|u^N(t)\|_{2,\rho}^2\right)}{ N^2}\leq\frac{C(1+\|u_0\|^2_{2,\rho})}{N^2},
\end{aligned}
\end{equation}
where $C$ is independent of $N$, it follows that $\tau_{\infty}=T$ a.s.
\section{Existence of invariant measures}

\subsection{Boundedness in probability}
 Different weights correspond to different $C_{\rho}(t)$  in inequality \eqref{Cr}. Our subsequent calculations heavily rely on the form of $C_{\rho}(t)$, so we choose $\hat{\rho}(x)=e^{\hat{m}|x|}$, where $\hat{m}>0$. Morover, in this section, the function $b(x)$ in condition (A1) belongs to $L^2_{\hat\rho}(\R)$. To establish the existence of invariant measures for equation \eqref{2.1}, we require the following estimates.
\begin{thm}\label{77}
Assume that (A1) holds. Let $0<C^*<\frac{3}{5}$ and $k>\max\{ \frac{C^*}{3}+\delta, \frac{4}{3-5C^*}\}$. Then for any $\varepsilon >0$, there exists $R(\varepsilon)>0$, such that for any $T\geq1$, we have
\begin{equation}\label{T}
\frac{1}{T}\int_{0}^{T}P(\|u(t,u_{0})\|_{2,\hat{\rho}}> R)\leq\varepsilon.
\end{equation}
\end{thm}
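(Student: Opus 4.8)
The plan is to reduce \eqref{T} to a time-uniform second-moment bound and then to read that bound off the energy estimate behind Theorem \ref{ve}, this time keeping the dissipative cubic term rather than discarding it. First, by Chebyshev's inequality,
\[
P\big(\|u(t,u_{0})\|_{2,\hat\rho}>R\big)=P\big(\|u(t,u_{0})\|_{2,\hat\rho}^{2}>R^{2}\big)\le \frac{1}{R^{2}}\,E\|u(t,u_{0})\|_{2,\hat\rho}^{2},
\]
so it suffices to exhibit a constant $M$, independent of $T$, with $\frac{1}{T}\int_{0}^{T}E\|u(t,u_{0})\|_{2,\hat\rho}^{2}\,dt\le M$ for all $T\ge1$; any $R$ with $M/R^{2}\le\varepsilon$ then yields \eqref{T}. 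In fact I would aim at the stronger bound $\sup_{t\ge0}E\|u(t,u_{0})\|_{2,\hat\rho}^{2}<\infty$, which trivially controls the time average.

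Next I would decompose $u=v+\eta$ as in \eqref{vvv}--\eqref{eta} with weight $\hat\rho$, so that $\|u\|_{2,\hat\rho}^{2}\le2\|v\|_{2,\hat\rho}^{2}+2\|\eta\|_{2,\hat\rho}^{2}$, and rerun for $v$ the computation that produced \eqref{esti}, but now \emph{retain} the cubic term. Grouping the $\|v\|_{3,\hat\rho}^{3}$ contributions gives a coefficient $\tfrac{2C^{*}\varepsilon_{3}}{3}+\tfrac{4\varepsilon_{4}}{3}+\tfrac{C^{*}}{3}+\tfrac{4k\varepsilon_{2}}{3}-k$, which the hypotheses $0<C^{*}<\tfrac35$ and $k>\tfrac{4}{3-5C^{*}}$ force to be \emph{strictly} negative, say $-c_{0}<0$, once the Young parameters $\varepsilon,\varepsilon_{i}$ are tuned. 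One is then left with
\[
\frac{d}{dt}\|v(t)\|_{2,\hat\rho}^{2}\le \gamma\,\|v(t)\|_{2,\hat\rho}^{2}-c_{0}\,\|v(t)\|_{3,\hat\rho}^{3}+R_{2}(\eta(t)),
\]
where $\gamma=\tfrac{2(C^{*})^{2}}{3}+\tfrac{C^{*}}{2}+k$ and $R_{2}(\eta)$ is, like $R_{1}$, a fixed combination of $\|\eta\|_{3,\hat\rho}^{3}$, $\|\eta\|_{4,\hat\rho}^{4}$ and $\|\eta\|_{6,\hat\rho}^{6}$.

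The third step converts this into a dissipative inequality. The surplus cubic term $-c_{0}\|v\|_{3,\hat\rho}^{3}$ is meant to dominate the quadratic growth $\gamma\|v\|_{2,\hat\rho}^{2}$, leaving a strictly negative net coefficient of $\|v\|_{2,\hat\rho}^{2}$ and thus
\[
\frac{d}{dt}\|v(t)\|_{2,\hat\rho}^{2}\le -c\,\|v(t)\|_{2,\hat\rho}^{2}+R_{2}(\eta(t)),\qquad c>0.
\]
Gronwall's inequality then gives $\|v(t)\|_{2,\hat\rho}^{2}\le e^{-ct}\|u_{0}\|_{2,\hat\rho}^{2}+\tfrac1c\sup_{s\le t}R_{2}(\eta(s))$, a bound that does not deteriorate as $t\to\infty$. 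For $\eta$ I would upgrade Lemma \ref{eta2} and \eqref{eta3}, whose constants depend on $T$, to a \emph{time-uniform} moment bound: either by passing to the stationary (two-sided) stochastic convolution, or by replacing $\eta$ with the solution of $d\tilde\eta=(\Delta-\lambda)\tilde\eta\,dt+\sigma(u)\,dW$ for $\lambda$ exceeding the $L^{2}_{\hat\rho}$-growth rate of $\Delta$ and absorbing the correction $\lambda\tilde\eta$ into the drift of the $v$-equation. Taking expectations, averaging over $[0,T]$, and feeding the resulting uniform bound on $E\|u(t)\|_{2,\hat\rho}^{2}$ into the Chebyshev estimate of the first step closes the argument.

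The main obstacle is the conversion in the third step together with the control of $\eta$. On the whole line $\hat\rho\,dx$ is an infinite measure, so the naive Young bound $\|v\|_{2,\hat\rho}^{2}\le\lambda\|v\|_{3,\hat\rho}^{3}+C_{\lambda}$ is false for any finite $C_{\lambda}$; moreover the weighted Laplacian is \emph{not} dissipative on $L^{2}_{\hat\rho}$ (its growth rate is positive, of order $(C^{*})^{2}$) and space-time white noise makes the unmodified convolution grow in $t$. Since the damping is the only dissipative mechanism, the whole estimate hinges on verifying that under $C^{*}<\tfrac35$ and $k>\tfrac{4}{3-5C^{*}}$ the cubic term genuinely beats both the weighted-Laplacian growth and the noise input uniformly in time, rather than merely on compact intervals as in Theorem \ref{ve}.
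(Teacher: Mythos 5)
There is a genuine gap, and you have put your finger on it yourself in your last paragraph without closing it. Your third step asks the retained cubic term $-c_{0}\|v\|_{3,\hat\rho}^{3}$ to absorb the quadratic growth $\gamma\|v\|_{2,\hat\rho}^{2}$, but as you note this fails on $L^{2}_{\hat\rho}(\R)$: since $\hat\rho\,dx$ is an infinite measure there is no inequality of the form $\|v\|_{2,\hat\rho}^{2}\le\lambda\|v\|_{3,\hat\rho}^{3}+C_{\lambda}$, and for $v$ small in $L^{3}_{\hat\rho}$ the cubic term is negligible compared with the quadratic one. A proposal that ends by observing that its central step ``hinges on verifying'' something that is false as stated is not a proof. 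The paper's actual dissipative mechanism is different and is the ingredient you are missing: the weighted Poincar\'e inequality of Lemma \ref{poi} (valid for $\hat\rho=e^{\hat m|x|}$ with $\hat m<\sqrt{2}$, via Edmunds--Opic), $\int_{\R}|v_{x}|^{2}\hat\rho\,dx\ge\tilde C\int_{\R}|v|^{2}\hat\rho\,dx$. In the energy identity one keeps the full negative term $-(1-\tfrac{C^{*}\varepsilon}{2}-\tfrac14)\|v_{x}\|_{2,\hat\rho}^{2}$ and converts it through Lemma \ref{poi} into $-\,\mathrm{const}\cdot\tilde C\,\|v\|_{2,\hat\rho}^{2}$, which for $\hat m=C^{*}$ small enough and $k$ large enough dominates the positive quadratic contributions; the cubic term is only required to have a non-positive coefficient (so it can be discarded), not to generate linear dissipation. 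Your assertion that ``the damping is the only dissipative mechanism'' is therefore exactly backwards relative to the paper: the Laplacian, seen through the weighted Poincar\'e inequality, is the dissipative mechanism, and the damping merely neutralizes the $\|v\|_{3,\hat\rho}^{3}$ terms produced by the convection and the weight.

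Two further points of comparison. Your treatment of the stochastic convolution is on the right track: the paper indeed replaces $\eta$ by the $\alpha$-shifted convolution $\eta_{\alpha}$ of \eqref{ouz}, proves $\sup_{t\ge0}E\|\eta_{\alpha}(t)\|_{2,\hat\rho}^{p}\le C(\alpha)$ with $C(\alpha)\to0$ (Lemma \ref{z}, needing $\alpha>\hat m^{2}/4$ to beat the growth rate $e^{\hat m^{2}t/4}$ of $S(t)$ on $L^{2}_{\hat\rho}$ from Lemma \ref{Gt}), and absorbs the correction $\alpha\eta_{\alpha}$ into the $v$-equation \eqref{vv}; this matches your second suggested option. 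Finally, the paper does not prove $\sup_{t}E\|u(t)\|_{2,\hat\rho}^{2}<\infty$ and then apply Chebyshev as you propose; instead it estimates $\frac{d}{dt}\ln(\|v_{\alpha}(t)\|_{2,\hat\rho}^{2}\vee R)$ and integrates, which bounds $\frac1T\int_{0}^{T}P(\|v_{\alpha}(t)\|_{2,\hat\rho}^{2}>R)\,dt$ directly by $C(C_{\alpha}+\alpha^{2}C_{\alpha})/R$ without needing a moment bound on $v_{\alpha}$ itself. Your Gronwall-in-expectation route could in principle be made to work once the correct dissipativity is in place, but as written the proof does not go through.
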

\begin{rem}
By Lemma \ref{z} and Lemma \ref{y}, it is straightforward to conclude that inequality \eqref{T} holds.
\end{rem}

\begin{lem}\label{Gt}
Let $\tilde{G}(t,x-y)=\dfrac{1}{\sqrt{t}}\exp({-\dfrac{|x-y|^2}{at}})$. Then we have
$$\int_{\R}\tilde{G}(t,x-y)\hat\rho(y)dy\leq C e^{\frac{\hat{m}^2 at}{4}}\hat\rho(x).$$
\end{lem}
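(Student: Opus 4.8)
The plan is to bound the weighted Gaussian integral by first stripping off the factor $\hat\rho(x)=e^{\hat m|x|}$ using the triangle inequality, and then reducing to a one-dimensional Gaussian integral that can be evaluated by completing the square.

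First I would apply the elementary inequality $|y|\le |x|+|x-y|$, which gives $\hat\rho(y)=e^{\hat m|y|}\le e^{\hat m|x|}e^{\hat m|x-y|}=\hat\rho(x)\,e^{\hat m|x-y|}$. Substituting this into the integral and pulling out the $x$-dependent factor yields
$$\int_{\R}\tilde G(t,x-y)\hat\rho(y)\,dy\le \hat\rho(x)\int_{\R}\frac{1}{\sqrt t}\exp\Big(-\frac{|x-y|^2}{at}\Big)e^{\hat m|x-y|}\,dy.$$
The change of variables $z=x-y$ turns the remaining integral into $\frac{1}{\sqrt t}\int_{\R}\exp(-z^2/(at)+\hat m|z|)\,dz$, which no longer depends on $x$; by symmetry of the integrand it equals $\frac{2}{\sqrt t}\int_0^\infty\exp(-z^2/(at)+\hat m z)\,dz$.

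Next I would complete the square in the exponent, writing $-\frac{z^2}{at}+\hat m z=-\frac{1}{at}\big(z-\frac{\hat m at}{2}\big)^2+\frac{\hat m^2 at}{4}$, so that the factor $e^{\hat m^2 at/4}$ factors out exactly as claimed. The substitution $w=(z-\hat m at/2)/\sqrt{at}$ then rescales away the $t$-dependence: the prefactor $\frac{1}{\sqrt t}$ cancels against the Jacobian $\sqrt{at}$, and the resulting integral $\int_{-\hat m\sqrt{at}/2}^{\infty} e^{-w^2}\,dw$ is bounded above by $\int_{\R}e^{-w^2}\,dw=\sqrt\pi$. Collecting constants gives the asserted bound with $C=2\sqrt{a\pi}$, independent of both $t$ and $x$.

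There is no serious obstacle here; the computation is routine. The only points requiring a little care are handling the absolute value $|z|$ when completing the square — most cleanly done by exploiting the symmetry of the integrand to restrict to $z\ge 0$ — and checking that the constant $C$ is genuinely uniform in $t$ and $x$, which is precisely what the cancellation of the $\frac{1}{\sqrt t}$ prefactor against $\sqrt{at}$ guarantees.
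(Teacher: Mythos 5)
Your proof is correct and follows essentially the same route as the paper's: bound $\hat\rho(y)\le\hat\rho(x)e^{\hat m|x-y|}$ by the triangle inequality, reduce to a one-dimensional Gaussian integral in $z=x-y$, complete the square to extract $e^{\hat m^2 at/4}$, and absorb the rest into a constant uniform in $t$ and $x$. If anything, your version is slightly more careful than the paper's (you keep the factor of $2$ from symmetrizing and correctly track the shifted lower limit $-\hat m\sqrt{at}/2$ before bounding by the full Gaussian integral), so no changes are needed.
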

\begin{proof}
\begin{align*}
\int_{\R}\tilde{G}(t,x-y)\hat\rho(y)\hat\rho^{-1}(x)dy
=&\int_{\R}\frac{1}{\sqrt{t}}e^{-\frac{|x-y|^2}{at}}e^{-\hat{m}|y|}e^{\hat{m}|x|}dy\\
\leq&\int_{\R}\frac{1}{\sqrt{t}}e^{-\frac{|x-y|^2}{at}}e^{\hat{m}|x-y|}dy\\
\leq&\int_{0}^\infty \frac{1}{\sqrt{t}}e^{-\frac{r^2}{at}}e^{\hat{m}r}dr=\int_{0}^\infty e^{-\frac{\tilde{r}^2}{a}+\hat{m}\sqrt{t}\tilde{r}}d\tilde{r}\\
=&e^{\frac{\hat{m}^2 at}{4}}\int_{0}^\infty e^{-\frac{y^2}{a}}dy=Ce^{\frac{\hat{m}^2 at}{4}}.
\end{align*}
\end{proof}
\begin{lem}\label{poi}
For $\hat{m}<\sqrt{2}$ and $u(t)\in C_0^{\infty}(\R)$, we have
\begin{equation}\label{poinc}
\int_{\R}|u_x(t)|^2 \hat\rho(x)dx\geq \tilde{C}\int_{\R} |u(t)|^2 \hat\rho(x)dx ,
\end{equation}
where $\tilde{C}>0$ is a finite constant.
\end{lem}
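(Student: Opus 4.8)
The plan is to reduce the weighted inequality to an essentially pointwise one by absorbing the weight into the unknown. Concretely, I would set $v(x)=u(t,x)\sqrt{\hat\rho(x)}=u(t,x)\,e^{\hat m|x|/2}$, which is again compactly supported and lies in $H^1(\R)$ (it is continuous and piecewise $C^1$, with a corner only at the origin). With this substitution the right-hand side becomes unweighted, $\int_\R|u(t)|^2\hat\rho\,dx=\int_\R v^2\,dx$, while differentiating $u=v\,e^{-\hat m|x|/2}$ gives $u_x=e^{-\hat m|x|/2}\bigl(v_x-\tfrac{\hat m}{2}\,\mathrm{sgn}(x)\,v\bigr)$ for $x\neq0$, so that
\[
\int_\R|u_x(t)|^2\hat\rho\,dx=\int_\R\Bigl(v_x-\tfrac{\hat m}{2}\,\mathrm{sgn}(x)\,v\Bigr)^2dx .
\]

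Next I would expand the square and evaluate the three resulting terms. The diagonal terms give $\int_\R v_x^2\,dx\ge0$ and, using $\mathrm{sgn}(x)^2=1$ a.e., $\tfrac{\hat m^2}{4}\int_\R v^2\,dx$. The cross term is $-\hat m\int_\R \mathrm{sgn}(x)\,v\,v_x\,dx=-\tfrac{\hat m}{2}\int_\R \mathrm{sgn}(x)\,(v^2)_x\,dx$, which I would integrate by parts on $(-\infty,0)$ and $(0,\infty)$ separately. Since $v$ is compactly supported the contributions at $\pm\infty$ vanish, and matching the two boundary values at the origin (where $v$ is continuous) yields $\int_\R\mathrm{sgn}(x)(v^2)_x\,dx=-2\,v(0)^2$; hence the cross term equals $\hat m\,v(0)^2\ge0$. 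Combining these,
\[
\int_\R|u_x(t)|^2\hat\rho\,dx=\int_\R v_x^2\,dx+\hat m\,v(0)^2+\frac{\hat m^2}{4}\int_\R v^2\,dx\ \ge\ \frac{\hat m^2}{4}\int_\R|u(t)|^2\hat\rho\,dx ,
\]
so the claim holds with $\tilde C=\hat m^2/4>0$.

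The one delicate point, and the step I expect to be the main obstacle, is the nonsmoothness of $\hat\rho$ at $x=0$: both $\mathrm{sgn}(x)$ and the Dirac mass induced in $(\hat\rho^{1/2})''$ enter the cross term, so the integration by parts must be performed on each half-line independently rather than on all of $\R$ at once. Fortunately the jump produces the manifestly nonnegative quantity $\hat m\,v(0)^2$, so it only helps. In particular this computation exhibits a strictly positive constant for every $\hat m>0$, so the hypothesis $\hat m<\sqrt2$ is comfortably more than what is required to guarantee $\tilde C>0$ in \eqref{poinc}.
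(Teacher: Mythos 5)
Your computation is correct, and it proves the lemma by a genuinely different route from the paper. You substitute $v=u\hat\rho^{1/2}$ globally, expand $\int_\R\bigl(v_x-\tfrac{\hat m}{2}\mathrm{sgn}(x)v\bigr)^2dx$, and observe that the cross term, after integrating by parts on each half-line, contributes the nonnegative boundary quantity $\hat m\,v(0)^2$; this yields the explicit constant $\tilde C=\hat m^2/4$ for \emph{every} $\hat m>0$, with no external input. The paper instead splits $\R$ into a bounded neighbourhood $G$ of the origin and its complement $\Omega$: on $\Omega$ it invokes a weighted Poincar\'e inequality of Edmunds--Opic (Example 5.5 of \cite{ed}), and on $G$ it combines the classical Poincar\'e inequality with the crude bound $(a+b)^2\le 2a^2+2b^2$ applied to $(|u|\hat\rho^{1/2})_x$, which is where the hypothesis $\hat m<\sqrt2$ enters (to make $1-\hat m^2/2>0$ and get the local constant $\tfrac{2-\hat m^2}{4}$). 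Your argument is more elementary and self-contained, avoids the citation, removes the restriction $\hat m<\sqrt2$, and produces an explicit $\tilde C$ (which matters downstream, since Lemma 4.6 imposes conditions on $\hat m$ and $k$ in terms of $\tilde C$); it also sidesteps a soft spot in the paper's local step, where the Poincar\'e inequality on $G$ is stated for $u\in C_0^\infty(G)$ even though the solution need not vanish on $\partial G$. The only caveats are minor and you already flag the main one: the justification that $v$ is Lipschitz with an a.e.\ derivative equal to its weak derivative (no Dirac mass at $0$ because $v$ is continuous) should be stated, and strictly speaking the lemma is applied to $u(t)\in L^2_{\hat\rho}$ rather than $C_0^\infty$, so in either approach one closes by density.
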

\begin{proof}
Let $\Omega=\R\backslash\bar{G}$, where $G$ is a bounded domain in $\R$ such that $0\in G$. Then by Example 5.5 in \cite{ed}, there exists a finite constant $C>0$ such that
$$\int_{\Omega} |u(t)|^2 \hat\rho(x)dx\leq C\int_{\Omega}|u_x(t)|^2 \hat\rho(x)dx.$$
Note that in $G$, for $u\in C_0^{\infty}(G)$, the following inequality holds:
$$\int_{G} |u(t)|^2 dx\leq \int_{G}|u_x(t)|^2 dx.$$
Therefore we have
\begin{align*}
\int_{G}|u(t)|^2 \hat\rho(x) dx=&\int_{G}(|u(t)| \hat\rho^{\frac{1}{2}})^2 dx\\
\leq &\int_{G}\left(u_x(t)\hat\rho^{\frac{1}{2}}+u(t) (\hat\rho^{\frac{1}{2}})_{x}\right)^2 dx\\
\leq& 2\int_{G}|u_x(t)|^2\hat\rho dx+\frac{\hat{m}^2}{2}\int_{G}u^2(t)\hat\rho dx.
\end{align*}
Since $\hat{m}<\sqrt{2}$,  it follows that
$$\int_{G}|u_x(t)|^2\hat\rho dx\geq\frac{2-m^2}{4} \int_{G}|u(t)|^2 \hat\rho dx.$$
Consequently, inequality \eqref{poinc} is deduced.
\end{proof}

We consider the modified Ornstein-Uhlenbeck equation
\begin{equation}\label{ou}
\frac{\partial z}{\partial t}=\Delta z(t,x)dt-\alpha z(t,x)dt+\sigma(u(t,x))\frac{\partial^2 W}{\partial x\partial t}
\end{equation}
with zero initial, where $\alpha>0$ and will be chosen later. It is known that the solution of equation \eqref{ou} is
\begin{equation}\label{ouz}
\eta_{\alpha}(t,x)=\int_0^t \int_{\R}e^{-\alpha(t-s)}G(t-s,x-y)\sigma(u(s))W(dy,ds).
\end{equation}
\begin{lem}\label{z}
Assume that condition $(A1)$ holds and $k>0$. Then for any $\alpha>\frac{\hat{m}^2}{4}$, there exists positive constant $C(\alpha)$ such that for any $t\geq 0$,
$$
E\|\eta_{\alpha}(t)\|_{2,\hat\rho}^p\leq C(\alpha), \quad p\geq 1.
$$
Moreover, \\
\centerline{$\lim\limits_{ \alpha\to +\infty}C(\alpha)=0$.}
\end{lem}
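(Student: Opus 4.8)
The plan is to estimate the $p$-th moment pointwise in $x$ and then integrate against $\hat\rho$. For each fixed $t$ and $x$, the random variable $\eta_\alpha(t,x)$ is the value at the upper endpoint of a genuine $\mathcal F_s$-martingale in the integration variable: the integrand $e^{-\alpha(t-s)}G(t-s,x-y)\sigma(u(s,y))$ is adapted and does not depend on that endpoint, so the Burkholder--Davis--Gundy inequality applies. The crucial feature to extract is that the damping factor $e^{-\alpha(t-s)}$ makes every bound uniform in $t\ge 0$ and forces the constant to vanish as $\alpha\to+\infty$; this is exactly where the hypothesis $\alpha>\hat{m}^2/4$ enters, since it must beat the exponential growth of the weighted semigroup produced by Lemma \ref{Gt}.

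First I would fix $t,x$, apply Burkholder's inequality to the martingale just described, and use $|\sigma(u(s,y))|\le b(y)$ from $(A1)$ to make the quadratic variation deterministic, obtaining for $p\ge 2$
\[
E|\eta_\alpha(t,x)|^p\le C_p\Big(\int_0^t\int_\R e^{-2\alpha(t-s)}G^2(t-s,x-y)b^2(y)\,dy\,ds\Big)^{p/2}.
\]
To reassemble the weighted norm I would invoke Minkowski's integral inequality in the form $\big(E\|\eta_\alpha(t)\|_{2,\hat\rho}^p\big)^{2/p}\le\int_\R\big(E|\eta_\alpha(t,x)|^p\big)^{2/p}\hat\rho(x)\,dx$, which is valid because $p/2\ge1$. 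This reduces the problem to the deterministic integral
\[
\int_\R\int_0^t\int_\R e^{-2\alpha(t-s)}G^2(t-s,x-y)b^2(y)\,dy\,ds\,\hat\rho(x)\,dx.
\]

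The heart of the argument is the $x$-integration. Writing $G^2(r,x-y)=\frac{1}{4\pi\sqrt r}\,\tilde G(r,x-y)$ with $\tilde G(r,\cdot)=r^{-1/2}\exp(-|\cdot|^2/(2r))$, that is taking the parameter $a=2$ in Lemma \ref{Gt}, and using the symmetry of $\tilde G$, I obtain
\[
\int_\R G^2(t-s,x-y)\hat\rho(x)\,dx\le \frac{C}{\sqrt{t-s}}\,e^{\hat{m}^2(t-s)/2}\,\hat\rho(y).
\]
Substituting and integrating in $y$ leaves the scalar time integral $C\|b\|_{2,\hat\rho}^2\int_0^t (t-s)^{-1/2}e^{-(2\alpha-\hat{m}^2/2)(t-s)}\,ds$, which after the substitution $\tau=t-s$ is dominated, uniformly in $t$, by $C\|b\|_{2,\hat\rho}^2\int_0^\infty \tau^{-1/2}e^{-(2\alpha-\hat{m}^2/2)\tau}\,d\tau=C\|b\|_{2,\hat\rho}^2\sqrt\pi\,(2\alpha-\hat{m}^2/2)^{-1/2}$, finite precisely when $\alpha>\hat{m}^2/4$. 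Raising to the power $p/2$ yields the uniform bound
\[
E\|\eta_\alpha(t)\|_{2,\hat\rho}^p\le C_p\,\|b\|_{2,\hat\rho}^p\,(2\alpha-\hat{m}^2/2)^{-p/4}=:C(\alpha),\qquad t\ge0,
\]
whose right-hand side tends to $0$ as $\alpha\to+\infty$, giving the ``moreover'' statement. The range $1\le p<2$ follows from the case $p=2$ by Jensen's inequality $E\|\eta_\alpha(t)\|_{2,\hat\rho}^p\le(E\|\eta_\alpha(t)\|_{2,\hat\rho}^2)^{p/2}$, which preserves both the uniformity in $t$ and the decay in $\alpha$.

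I expect the main obstacle to be keeping every constant genuinely independent of $t$. Because $\hat\rho=e^{\hat{m}|x|}$, the $G^2$-estimate from Lemma \ref{Gt} grows like $e^{\hat{m}^2(t-s)/2}$ in the time lag, so the whole scheme closes only because the damping $e^{-2\alpha(t-s)}$ strictly dominates this growth; the threshold $\alpha>\hat{m}^2/4$ is exactly the condition for the resulting $\Gamma$-type integral to converge over $[0,\infty)$ rather than merely over a finite horizon. A secondary point requiring care is the correct matching of the Gaussian exponents when applying Lemma \ref{Gt} to $G^2$, namely $a=2$ rather than the $a=4$ appropriate for a single $G$; an error there would shift the threshold.
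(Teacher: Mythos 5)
Your proposal is correct and follows essentially the same route as the paper: bound $\sigma$ by $b$ via $(A1)$, reduce to the deterministic integral of $e^{-2\alpha(t-s)}G^2$ against $\hat\rho$, apply Lemma \ref{Gt} with the exponent $a=2$ appropriate for $G^2$, and evaluate the resulting Gamma-type time integral to get the bound $C\pi^{p/4}(2\alpha-\hat m^2/2)^{-p/4}$, which is exactly the paper's constant $C(\alpha)$. Your pointwise-in-$x$ application of Burkholder followed by Minkowski's integral inequality is a slightly more carefully justified version of the single inequality the paper writes, and your remark on handling $1\le p<2$ by Jensen fills a detail the paper leaves implicit; neither changes the substance of the argument.
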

\begin{proof}
By Lemma \ref{Gt}, we have
\begin{align*}
E\|\eta_{\alpha}(t)\|_{2,\hat\rho}^p=&E\left(\int_{\R}\left|\int_0^t\int_{\R}G(t-s,x-y) e^{-\alpha(t-s)}\sigma(u(s,y))dW(s,y)\right|^2\hat\rho(x)dx\right)^{\frac{p}{2}}\\
\leq &E\left(\int_{\R}\int_0^t\int_{\R}G^2(t-s,x-y) e^{-2\alpha(t-s)}\sigma^2(u(s,y))dyds\hat\rho(x)dx\right)^{\frac{p}{2}}\\
\leq&C\|b\|_{2,\hat\rho}^p\left(\int_0^t (t-s)^{-\frac{1}{2}}e^{-2\alpha(t-s)}e^{\frac{\hat{m}^2}{2}(t-s)}ds\right)^{\frac{p}{2}}\\
=&C\left(\int_0^t (t-s)^{-\frac{1}{2}}e^{(\frac{\hat{m}^2}{2}-2\alpha)(t-s)}ds\right)^{\frac{p}{2}}.
\end{align*}
For $\alpha>\frac{\hat{m}^2}{4}$, by using the properties of the Gamma function, we obtain
\begin{equation}
E\|\eta(t)\|_{2,\hat\rho}^p\leq C\pi^{\frac{p}{4}}\left(2\alpha-\frac{\hat{m}^2}{2}\right)^{-\frac{p}{4}}=C(\alpha), \quad p\geq 1.
\end{equation}
\end{proof}

Combining Lemma \ref{z} with the Chebyshev inequality, we have
\begin{equation}\label{000}
\sup\limits_{t\geq{0}}P(\|\eta_{\alpha}(t)\|_{2,\hat{\rho}}^p>R)\leq\frac{C(\alpha)}{R^2}
\end{equation}
for any $R>0$.

Now we are going to find a similar result for the process $v_{\alpha}=u-\eta_{\alpha}$ which solves
\begin{equation}\label{vv}
\dfrac{\partial{v}(t)}{\partial t}=\Delta v(t)-k|v(t)+\eta_{\alpha}(t)|(v(t)+\eta_{\alpha}(t))-\frac{1}{2}\frac{\partial (v(t)+\eta_{\alpha}(t))^2}{\partial x}+\alpha \eta_{\alpha}(t)
\end{equation} with initial value $v(0)=0$. For this aim we have the following result.
\begin{lem}\label{y}
Assume that $0<\hat{m}<\min\left\{\frac{3\tilde{C}}{4\sqrt{\tilde{C}}+1}, \sqrt{2}\right\}$ and $k>\max\left\{ \frac{\hat{m}}{3}+\delta, \frac{4}{3\tilde{C}-\hat{m}-4\hat{m}\sqrt{\tilde{C}}}\right\}.$ Then for any $\varepsilon>0$, there exist $\alpha, R>0$ such that
\begin{equation}\label{T3}
\frac{1}{T}\int_{0}^{T}P(\|v_{\alpha}(t)\|^2_{2,\hat{\rho}}> R)ds<\varepsilon,
\end{equation}
for any $T>0$.
\end{lem}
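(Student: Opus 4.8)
The plan is to run the same weighted energy method as in Theorem \ref{ve}, but to exploit the specific exponential weight $\hat\rho(x)=e^{\hat m|x|}$ (for which $|\hat\rho'|=\hat m\hat\rho$, so the constant $C^*$ of the general theory equals $\hat m$) together with the Poincar\'e-type inequality of Lemma \ref{poi} to manufacture genuine dissipation in the $L^2_{\hat\rho}$-norm, and to control the new $\alpha\eta_\alpha$ forcing. Assuming as usual that $u_0$ and $\eta_\alpha$ are regular (the general case following by approximation), I would multiply \eqref{vv} by $v\hat\rho$ and integrate over $\R$ to get $\frac12\frac{d}{dt}\|v\|_{2,\hat\rho}^2=I_1+I_2+I_3+I_4$, where $I_1,I_2,I_3$ are the analogues of the three terms in Theorem \ref{ve} and $I_4=\alpha\int_{\R}\eta_\alpha v\,\hat\rho\,dx$ is the extra term coming from the $-\alpha z$ correction in \eqref{ou}.

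For $I_1$ I would integrate by parts and bound the weight derivative by Cauchy--Schwarz, $I_1\le-\|v_x\|_{2,\hat\rho}^2+\hat m\|v\|_{2,\hat\rho}\|v_x\|_{2,\hat\rho}$, and then invoke Lemma \ref{poi} in the sharp form $\|v\|_{2,\hat\rho}\le\tilde C^{-1/2}\|v_x\|_{2,\hat\rho}$ to get $I_1\le(\hat m\tilde C^{-1/2}-1)\|v_x\|_{2,\hat\rho}^2$. The terms $I_2$ and $I_3$ I would estimate exactly as in \eqref{I22}--\eqref{thi} with $C^*$ replaced by $\hat m$, routing every mixed $v$--$\eta_\alpha$ product either into the cubic norm $\|v\|_{3,\hat\rho}^3$ (to be absorbed by $-k\|v\|_{3,\hat\rho}^3$) or into a pure $\eta_\alpha$ forcing term; here $I_3$ contributes in addition $\frac14\|v_x\|_{2,\hat\rho}^2$ and $\frac{\hat m}{4}\|v\|_{2,\hat\rho}^2$. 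The new term I would dominate by Young's inequality in the form $I_4\le\frac1k\|v\|_{2,\hat\rho}^2+\frac{k\alpha^2}{4}\|\eta_\alpha\|_{2,\hat\rho}^2$, isolating the $\alpha$-dependence inside a pure forcing term. Collecting the total $\|v_x\|_{2,\hat\rho}^2$ coefficient as $\hat m\tilde C^{-1/2}-\frac34$ and applying Lemma \ref{poi} once more turns it into $(\hat m\sqrt{\tilde C}-\frac34\tilde C)\|v\|_{2,\hat\rho}^2$, so that the net coefficient of $\|v\|_{2,\hat\rho}^2$ is $\hat m\sqrt{\tilde C}-\frac34\tilde C+\frac{\hat m}{4}+\frac1k$.

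The hypotheses are tailored to make this coefficient strictly negative. The bound $\hat m<\sqrt2$ is exactly what Lemma \ref{poi} demands; the bound $\hat m<\frac{3\tilde C}{4\sqrt{\tilde C}+1}$ guarantees $3\tilde C-\hat m-4\hat m\sqrt{\tilde C}>0$; and $k>\frac{4}{3\tilde C-\hat m-4\hat m\sqrt{\tilde C}}$ is precisely the condition $\hat m\sqrt{\tilde C}-\frac34\tilde C+\frac{\hat m}{4}+\frac1k<0$. Simultaneously, $k>\frac{\hat m}{3}+\delta$ (with $\delta$ absorbing the small Young constants used on the cubic products, as in Theorem \ref{ve}) makes the coefficient of $\|v\|_{3,\hat\rho}^3$ nonpositive, so that term is dropped. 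This leaves the dissipative inequality $\frac{d}{dt}\|v_\alpha(t)\|_{2,\hat\rho}^2\le-2\beta\|v_\alpha(t)\|_{2,\hat\rho}^2+g_\alpha(t)$ for some $\beta>0$, where $g_\alpha$ is built solely from $\|\eta_\alpha\|_{2,\hat\rho}^2,\|\eta_\alpha\|_{3,\hat\rho}^3,\|\eta_\alpha\|_{4,\hat\rho}^4,\|\eta_\alpha\|_{6,\hat\rho}^6$. I expect this balancing --- forcing the Poincar\'e gain $\frac34\tilde C$ to beat the weight-induced losses $\hat m\sqrt{\tilde C}+\frac{\hat m}{4}+\frac1k$ while keeping the cubic coefficient nonpositive --- to be the main obstacle, since it is exactly what pins down the simultaneous smallness of $\hat m$ and largeness of $k$ in the statement.

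To conclude, since $v_\alpha(0)=0$, Gronwall's inequality gives $\|v_\alpha(t)\|_{2,\hat\rho}^2\le\int_0^t e^{-2\beta(t-s)}g_\alpha(s)\,ds$. Taking expectations and using Lemma \ref{z} together with its higher-norm analogues (and the embeddings $L^2_{\hat\rho}(\R)\subset L^p_{\hat\rho}(\R)$ recorded in Section 2) to bound $E g_\alpha(s)$ by a constant $D(\alpha)<\infty$ uniform in $s$ --- which needs $\alpha>\frac{\hat m^2}{4}$ so that the time integrals defining these moments converge --- I obtain $\sup_{t\ge0}E\|v_\alpha(t)\|_{2,\hat\rho}^2\le\frac{D(\alpha)}{2\beta}=:M(\alpha)<\infty$. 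Chebyshev's inequality then yields $P(\|v_\alpha(t)\|_{2,\hat\rho}^2>R)\le\frac{M(\alpha)}{R}$ uniformly in $t$, so that $\frac1T\int_0^T P(\|v_\alpha(t)\|_{2,\hat\rho}^2>R)\,dt\le\frac{M(\alpha)}{R}$ for every $T>0$; fixing any admissible $\alpha$ and choosing $R>\frac{M(\alpha)}{\varepsilon}$ gives \eqref{T3}.
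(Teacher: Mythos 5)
Your energy estimate is exactly the paper's: same multiplication by $v_\alpha\hat\rho$, same treatment of $I_1,I_2,I_3$ with $C^*=\hat m$, same Young splitting of the extra term $\alpha\int\eta_\alpha v_\alpha\hat\rho\,dx$, same two uses of Lemma \ref{poi}, and your net coefficient $\hat m\sqrt{\tilde C}-\tfrac34\tilde C+\tfrac{\hat m}{4}+\tfrac1k<0$ is precisely half of the paper's $2\hat m\sqrt{\tilde C}+\tfrac{\hat m}{2}+\tfrac2k-\tfrac{3\tilde C}{2}$, so the hypotheses on $\hat m$ and $k$ are used identically. Where you genuinely diverge is the final step. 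The paper does not apply Gronwall to the dissipative inequality; instead it differentiates $\ln\bigl(\|v_\alpha(t)\|_{2,\hat\rho}^2\vee R\bigr)$, integrates in time, takes expectations, and reads off the time-averaged probability $\frac1T\int_0^T P(\|v_\alpha\|_{2,\hat\rho}^2>R)\,dt\le C(C_\alpha+\alpha^2C_\alpha)/R$ directly, without ever producing a moment bound on $v_\alpha$. You instead use $v_\alpha(0)=0$ and Gronwall to get $\sup_{t\ge0}E\|v_\alpha(t)\|_{2,\hat\rho}^2\le M(\alpha)<\infty$ and then Chebyshev. Your route is legitimate here and in fact yields the stronger conclusion $\sup_{t\ge0}P(\|v_\alpha(t)\|_{2,\hat\rho}^2>R)\le M(\alpha)/R$ (matching what \eqref{000} gives for $\eta_\alpha$), since the forcing $\hat R_1(\eta_\alpha)$ has expectation bounded uniformly in time by Lemma \ref{z} and the embeddings of Section 2 --- the same facts the paper uses to bound $E\hat R_1(\eta_\alpha)\le C(C_\alpha+\alpha^2C_\alpha)$. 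The paper's logarithm trick buys robustness (it would survive situations where only the ratio $\hat R_1/\|v_\alpha\|^2$ on the event $\{\|v_\alpha\|^2>R\}$, rather than $E\|v_\alpha\|^2$ itself, can be controlled), but for this lemma both arguments close; only watch that your bookkeeping of the $\|v\|_{2,\hat\rho}^2$-contribution from $I_2$ (the paper's $\tfrac{\varepsilon_1 k}{2}=\tfrac{1}{2k}$ term) is folded consistently into the $\tfrac1k$ you attribute to $I_4$, which is a harmless constant adjustment.
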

\begin{proof}
Multiplying inequality \eqref{vv} by $v_{\alpha}\rho(x)$  and integrating over $\R$, we find
\begin{align*}
\frac{1}{2}\frac{d}{dt}\|v_{\alpha}(t)\|_{2,\rho}^2\leq&\left(\frac{\hat{m}\varepsilon}{2}-1+\frac{1}{4}\right)\|(v_{\alpha})_{x}\|_{2,\rho}^2+\left(\frac{\hat{m}}{2\varepsilon}
+\frac{\varepsilon_1k}{2}+\frac{\alpha \varepsilon_5}{2}+\frac{\hat{m}}{4}\right)\|v_{\alpha}(t)\|_{2,\rho}^2\\
&+\left(\frac{2\hat{m}\varepsilon_3}{3}+\frac{4\varepsilon_4}{3}+\frac{4k\varepsilon_2}{3}+\frac{\hat{m}}{3}-k \right)\|v_{\alpha}(t)\|_{3,\rho}^3+\frac{\alpha}{2\varepsilon_5}\|\eta_{\alpha}(t)\|_{2,\rho}^2\\
&+\left(\frac{2k}{3\varepsilon_2^2}+\frac{\hat{m}}{3\varepsilon_3^2}\right)
\|\eta_{\alpha}(t)\|_{3,\rho}^3+\left(\frac{\hat{m}}{4}+\frac{1}{2}+\frac{k}{2\varepsilon_1}\right)\|\eta_{\alpha}(t)\|_{4,\rho}^4
+\frac{2}{3\varepsilon_4^2}\|\eta_{\alpha}(t)\|_{6,\rho}^6.
\end{align*}
Take $\varepsilon=\frac{1}{\sqrt{\tilde{c}}}$, $\varepsilon_1=\frac{1}{k^2}$, $\varepsilon_5=\frac{1}{\alpha k}$, $\delta=\frac{2\hat{m}\varepsilon_3}{3}+\frac{4\varepsilon_4}{3}+\frac{4k\varepsilon_2}{3}$. Let $0<\hat{m}<\min\{\frac{3\tilde{C}}{4\sqrt{\tilde{C}}+1}, \sqrt{2}\}$ and $k>\max\{ \frac{\hat{m}}{3}+\delta, \frac{4}{3\tilde{C}-\hat{m}-4\hat{m}\sqrt{\tilde{C}}}\}$. Then we have
$$\frac{2\hat{m}\varepsilon_3}{3}+\frac{4\varepsilon_4}{3}+\frac{4k\varepsilon_2}{3}+\frac{\hat{m}}{3}-k \leq 0,$$
$$\frac{\hat{m}\varepsilon}{2}-1+\frac{1}{4}<0,$$ and
$$(\frac{\hat{m}\varepsilon}{2}-1+\frac{1}{4})\tilde{C}+\frac{\hat{m}}{2\varepsilon}
+\frac{\varepsilon_1k}{2}+\frac{\alpha \varepsilon_5}{2}+\frac{\hat{m}}{4}<0.$$
By combining Lemma \ref{poi}, we get
\begin{equation}
\begin{aligned}
\frac{d}{dt}\|v_{\alpha}(t)\|_{2,\rho}^2\leq&\left(2\hat{m}\sqrt{\tilde{C}}+\frac{\hat{m}}{2}+\frac{2}{k}-\frac{3\tilde{C}}{2}\right)\|v_{\alpha}(t)\|_{2,\rho}^2
+\hat{R}_{1}(\eta_{\alpha}),
\end{aligned}
\end{equation}
where
$$\hat{R}_{1}(\eta_{\alpha})={\alpha^2k}\|\eta_{\alpha}\|_{2,\rho}^2+\left(\frac{4k}{3\varepsilon_2^2}+\frac{2\hat{m}}{3\varepsilon_3^2}\right)
\|\eta_{\alpha}(t)\|_{3,\rho}^3+\left(\frac{ \hat{m}}{2}+1+k^3\right)\|\eta_{\alpha}(t)\|_{4,\rho}^4+\frac{4}{3\varepsilon_4^2}\|\eta_{\alpha}(t)\|_{6,\rho}^6.$$
For every $R > 0$ the following happens
\begin{align*}
&\frac{d}{d t} \ln (\|v_\alpha(t)\|_{2,\hat\rho}^2 \vee R) \\
&=1_{\{\|v_\alpha(t)\|_{2,\hat\rho}^2>R\}} \frac{1}{\|v_\alpha(t)\|_{2,\hat\rho}^2} \frac{d}{dt}\|v_\alpha(t)\|_{2,\hat\rho}^2 \\
&\leq 1_{\{\|v_\alpha(t)\|_{2,\hat\rho}^2>R\}}\left(2\hat{m}\sqrt{\tilde{C}}+\frac{\hat{m}}{2}+\frac{2}{k}-\frac{3\tilde{C}}{2}\right)+1_{\{\|v_\alpha(t)\|_{2,\hat\rho}^2>R\}} \frac{\hat{R}_1(\eta_{\alpha})}{\|v_\alpha(t)\|_{2,\hat\rho}^2}\\
&\leq 1_{\{\|v_\alpha(t)\|_{2,\hat\rho}^2>R\}}\left(2\hat{m}\sqrt{\tilde{C}}+\frac{\hat{m}}{2}+\frac{2}{k}-\frac{3\tilde{C}}{2}\right) + \frac{\hat{R}_1(\eta_{\alpha})}{R}.
\end{align*}
Integrating from 0 to $T$ and taking expectation, we get
\begin{align*}
&-\left(2\hat{m}\sqrt{\tilde{C}}+\frac{\hat{m}}{2}+\frac{2}{k}-\frac{3\tilde{C}}{2}\right)\int_0^T P(\|v_{\alpha}(s)\|_{2,\hat\rho}^2>R)ds\\
\leq &E\int_{0}^T 1_{\{\|v_\alpha(t)\|_{2,\hat\rho}^2>R\}} \frac{\hat{R}_1(\eta_{\alpha})}{R}dt\\
\leq&\frac{CT}{R}\sup\limits_{t\geq0}(C_{\alpha}+{\alpha^2}C_{\alpha})\\
\leq&CT\left(\frac{C_{\alpha}+\alpha^2C_{\alpha}}{R} \right).
\end{align*}
We can choose $\alpha$ and $R$ suitably such that $\int_0^T P(\|v_{\alpha}(s)\|_{2,\hat\rho}^2>R)ds$ can be as small as we want, uniformly in time.
\end{proof}

\subsection{The existence of invariant measures}
To employ the Krylov-Bogolioubov theorem, we will proceed to establish the Feller property of the semigroup $P_{t}$ associated with solution $u(t,x)$.
\begin{lem}\label{con}
For any $u_{01}$, $u_{02}\in L^2_{\rho}$ and $N>0$, define
\begin{equation}
\tau_{N}^{i}:=\inf\{t\geq0:\|u(t,u_{0i})\|_{2,\rho}>N\}, i=1,2,
\end{equation}
and
$$\tau_N=\tau_N^1\wedge\tau_N^2.$$ Then
$$ E\|u(t\wedge \tau_N,u_{01})-u(t\wedge \tau_N,u_{02})\|_{2,\rho}^2\leq C(t,N,k,L)\|u_{01}-u_{02}\|_{2,\rho}^2.$$
\end{lem}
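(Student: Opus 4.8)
The plan is to subtract the two mild representations, freeze the dynamics at $\tau_N$, and then estimate the four resulting terms in the same spirit as Step 3 of the proof of Lemma \ref{lo} --- the essential difference being that the uniform bound $\|u(s,u_{0i})\|_{2,\rho}\le N$, valid for $s<\tau_N$, now comes from the stopping time instead of the truncation $\pi_N$. Write $w(t):=u(t,u_{01})-u(t,u_{02})$ and set $\varphi(t):=E\|w(t\wedge\tau_N)\|_{2,\rho}^2$. From the mild formulation,
$$w(t\wedge\tau_N)=D_1+D_2+D_3+D_4,$$
where $D_1=S(t\wedge\tau_N)(u_{01}-u_{02})$, while $D_2$ and $D_3$ collect the differences of the damping term $-k|u|u$ and of the convective term $-\frac12\partial_x u^2$ integrated up to $t\wedge\tau_N$, and $D_4$ is the difference of the stochastic convolutions. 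The key observation is that every integrand in $D_2,D_3,D_4$ only involves $u(s,u_{0i})$ for $s<\tau_N$, so the $L^2_{\rho}$-norms of both solutions appearing in the estimates are bounded by $N$, and via the embedding $L^2_\rho(\R)\subset L^\infty(\R)$ their pointwise values are controlled by $N$ as well.

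I would first bound $D_1$ directly from the kernel property \eqref{Cr}, giving $\|D_1\|_{2,\rho}^2\le C_\rho(t)\|u_{01}-u_{02}\|_{2,\rho}^2$. For $D_2$ I would factor $|u_1|u_1-|u_2|u_2=(u_1-u_2)\theta$ with $|\theta|\le |u_1|+|u_2|$, apply Minkowski's and H\"older's inequalities, and use the $L^\infty$-bound by $N$ to pull out a factor $C(k,N)$, arriving at $E\|D_2\|_{2,\rho}^2\le C(k,N,t)\int_0^t\varphi(s)\,ds$. For $D_3$ I would write $u_1^2-u_2^2=(u_1-u_2)(u_1+u_2)$, move the derivative onto the kernel, and invoke the gradient estimate \eqref{88}; the resulting singularity is integrable and yields $E\|D_3\|_{2,\rho}^2\le C(N,t)\int_0^t (t-s)^{-3/4}\varphi(s)\,ds$. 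For $D_4$ I would use Burkholder's inequality together with the Lipschitz bound $|\sigma(x,r)-\sigma(x,s)|\le L|r-s|$ from $(A1)$ and the heat-kernel $L^2$-estimate, obtaining $E\|D_4\|_{2,\rho}^2\le C(L,t)\int_0^t (t-s)^{-1/2}\varphi(s)\,ds$.

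Collecting these estimates gives the integral inequality
$$\varphi(t)\le C_\rho(t)\|u_{01}-u_{02}\|_{2,\rho}^2+C(k,N,t,L)\int_0^t\Big(1+(t-s)^{-3/4}+(t-s)^{-1/2}\Big)\varphi(s)\,ds.$$
Since the kernel $1+(t-s)^{-3/4}+(t-s)^{-1/2}$ is only weakly singular, the classical Gronwall lemma does not apply directly; instead I would close the argument with the generalized (singular) Gronwall--Henry inequality, which yields $\varphi(t)\le C(t,N,k,L)\|u_{01}-u_{02}\|_{2,\rho}^2$, as claimed. The two places that require care are the bookkeeping of the stopping time --- making sure each integrand genuinely appears as $w(s\wedge\tau_N)$ and that the $N$-bounds are invoked only for $s<\tau_N$ --- and the use of the weakly singular Gronwall lemma in place of the ordinary one; the remaining manipulations are routine repetitions of the contraction estimates already carried out in Lemma \ref{lo}.
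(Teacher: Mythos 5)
Your proposal is correct and follows essentially the same route as the paper: the same four-term decomposition of the difference of mild solutions, the same use of the stopping time to replace the truncation $\pi_N$ as the source of the $N$-bound, the same kernel, Minkowski/H\"older, gradient estimate \eqref{88}, and Burkholder arguments, ending in the same weakly singular integral inequality. Your explicit appeal to the singular (Gronwall--Henry) version of Gronwall's lemma is a point where you are slightly more careful than the paper, which simply invokes ``Gronwall's inequality'' for the kernel $1+(t-s)^{-3/4}+(t-s)^{-1/2}$.
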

\begin{proof}
Write $u_1(t,x)=u(t,u_{01})$ and $u_2(t,x)=u(t,u_{02})$. Set $t_{N}=t\wedge\tau_N$. From the definition of the mild solution, we have
\begin{align*}
u_1(t_N,x)-u_2(t_N,x)=&\int_{\R}G(t_N,x-y)(u_{01}(y)-u_{02}(y))dy\\
&+\int_0^{t_N}\int_{\R}kG(t_N-s,x-y)(|u_2(s,y)|u_2(s,y)-|u_1(s,y)|u_1(s,y))dyds\\
&-\frac{1}{2}\int_0^{t_N}\int_{\R}\frac{\partial G}{\partial y}(t_N-s,x-y)(u_1^2(s,y)-u_2^2(s,y))dyds\\
&+\int_0^{t_N}\int_{\R}G(t_N-s,x-y)(\sigma(u_1(s,y))-\sigma(u_2(s,y)))W(dy,ds).\\
=:&K_1(t_N,x)+K_2(t_N,x)+K_3(t_N,x)+K_4(t_N,x).
\end{align*}
As for $K_1(t_N,x)$, we have
\begin{align*}
\|K_1(t_N,x)\|_{2,\rho}^2\leq \int_{\R}\int_{\R}G(t_N, x-y)|u_{01}(y)-u_{02}(y)|^2\rho(x)dxdy\leq C(t)\|u_{01}-u_{02}\|_{2,\rho}^2.
\end{align*}
Applying Minkowski's inequality, we get
\begin{align*}
\|K_2(t_N,x)\|_{2,\rho}^2\leq& k \int_0^{t_N}\left\|\int_{\R}G(t_N, x-y) (|u_2(s,y)|u_2(s,y)-|u_1(s,y)|u_1(s,y)) dy\right\|_{2,\rho}^2ds\\
\leq& C(k,t)\|u_1(t)+u_2(t)\|_{2,\rho}^2\int_0^{t_N}\|u_1(s)-u_2(s)\|_{2,\hat\rho}^2ds\\
\leq& C(k,t,N) \int_0^{t_N}\|u_1(s)-u_2(s)\|_{2,\hat\rho}^2ds.
\end{align*}
By Minkowski's inequality and inequality \eqref{88}, we have
\begin{align*}
\|K_3(t_N,x)\|_{2,\rho}^2\leq&C\left( \int_0^{t_N}\left\|\int_{\R}\frac{\partial G}{\partial y}(t_N, x-y) (u^2_1(s,y)-u^2_2(s,y)) dy\right\|_{2,\rho}ds\right)^2\\
\leq& C(t)\|u_1(s)+u_2(s)\|_{2,\rho}^2 \left(\int_0^{t_N}(t_N-s)^{-\frac{3}{4}}\|u_1(s)-u_2(s)\|_{2,\hat\rho}ds\right)^2\\
\leq& C(t,N)\int_0^{t_N}(t_N-s)^{-\frac{3}{4}}\|u_1(s)-u_2(s)\|^2_{2,\hat\rho}ds.
\end{align*}
By  Burkholder's inequality and condition (A1), we obtain
\begin{align*}
E\|K_4(t_N,x)\|_{2,\rho}^2\leq&E\int_{\R} \left| \int_0^{t_N}\int_{\R}G^2(t_N, x-y) (\sigma(u_1(s,y))-\sigma(u_2(s,y)))^2 dyds\right|\rho(x)dx\\
\leq& C(L,t)\int_0^{t_N}(t_N-s)^{-\frac{1}{2}}E\|u_1(s)-u_2(s)\|_{2,\hat\rho}^2ds.
\end{align*}
Combining all the above estimates, we get
\begin{align*}
 &E\|u_1(t_N)-u_2(t_N)\|_{2,\rho}^2\\
 \leq &C(t)\|u_{01}-u_{02}\|_{2,\rho}^2\\
 &+ \int_0^{t_N} \left(C(k,t,N)+C(t,N)(t_N-s)^{-\frac{3}{4}}+C(L,t)(t_N-s)^{-\frac{1}{2}}\right)E\|u_1(s)-u_2(s)\|_{2,\hat\rho}^2ds.
 \end{align*}
Then by Gronwall's inequality, we derive the desired estimate.
\end{proof}
\begin{prop}
Suppose that $(A1)$ holds. Then $(P_{t})_{t\geq 0}$ associated with the solution $u(t,x,u_0)$ is a Feller semigroup.
\end{prop}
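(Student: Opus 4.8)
The plan is to verify, for a fixed $\phi\in C_b(L^2_\rho(\R))$ and $t>0$, the two properties defining the Feller property of $P_t\phi$: boundedness and continuity. Boundedness is immediate, since $|P_t\phi(u_0)|=|E\phi(u(t,u_0))|\le\|\phi\|_\infty$. The substance is continuity, which I would establish sequentially: given $u_{0n}\to u_0$ in $L^2_\rho(\R)$, I will show $u(t,u_{0n})\to u(t,u_0)$ in probability in $L^2_\rho(\R)$; the continuous mapping theorem then gives $\phi(u(t,u_{0n}))\to\phi(u(t,u_0))$ in probability, and since $\phi$ is bounded, bounded convergence yields $E\phi(u(t,u_{0n}))\to E\phi(u(t,u_0))$, i.e. $P_t\phi(u_{0n})\to P_t\phi(u_0)$.

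The core is the convergence in probability, for which I would localise with the stopping times of Lemma \ref{con}. For each $N$ set $\tau_N^n=\inf\{s\ge0:\|u(s,u_{0n})\|_{2,\rho}>N\}$ and define $\tau_N^0$ analogously for $u_0$, with $\tau_N=\tau_N^n\wedge\tau_N^0$. For $\delta>0$ I split
$$P(\|u(t,u_{0n})-u(t,u_0)\|_{2,\rho}>\delta)\le P(\|u(t,u_{0n})-u(t,u_0)\|_{2,\rho}>\delta,\ \tau_N>t)+P(\tau_N\le t).$$
On $\{\tau_N>t\}$ one has $t\wedge\tau_N=t$, so the solutions coincide with their stopped versions; by Chebyshev's inequality and Lemma \ref{con} the first term is bounded by $\delta^{-2}C(t,N,k,L)\|u_{0n}-u_0\|_{2,\rho}^2$, which tends to $0$ as $n\to\infty$ for each fixed $N$. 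The second term I control with the a priori bound of Theorem \ref{solution}: by Chebyshev,
$$P(\tau_N^m\le t)\le P\Big(\sup_{s\in[0,t]}\|u(s,u_{0m})\|_{2,\rho}^2\ge N^2\Big)\le\frac{C(t)(1+\|u_{0m}\|_{2,\rho}^2)}{N^2},$$
whence $P(\tau_N\le t)\le 2C(t)(1+\sup_m\|u_{0m}\|_{2,\rho}^2)/N^2$, uniformly in $n$, because the convergent sequence $\{\|u_{0n}\|_{2,\rho}\}$ is bounded.

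Combining the two bounds, given $\varepsilon>0$ I would first choose $N$ large enough that the uniform estimate on $P(\tau_N\le t)$ is below $\varepsilon/2$, and then, with $N$ fixed, take $n$ large enough that the localised term falls below $\varepsilon/2$; this yields the desired convergence in probability and completes the argument. The \emph{main technical obstacle} is exactly this interchange of limits: the constant in Lemma \ref{con} blows up as $N\to\infty$, so one cannot send $N\to\infty$ before $n\to\infty$. The resolution is the uniform-in-$n$ smallness of $P(\tau_N\le t)$ provided by the boundedness estimate of Theorem \ref{solution} together with the boundedness of $\{\|u_{0n}\|_{2,\rho}\}$, which is precisely what permits $N$ to be selected before $n$.
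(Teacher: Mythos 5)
Your proof is correct, and its core machinery is the same as the paper's: localize with the stopping times $\tau_N$ of Lemma \ref{con}, use the Gronwall estimate on the stopped solutions together with Chebyshev's inequality, and control $P(\tau_N\le t)$ uniformly via the a priori moment bound of Theorem \ref{solution}, choosing $N$ before letting the initial data converge. The difference lies in the finishing step. The paper proves a uniform statement on balls $B(0,r)$ and passes from closeness of the stopped solutions to closeness of $E\phi$ by invoking the \emph{uniform} continuity of $\phi$ on $B(0,\hat N)$ --- a step that is actually delicate, since closed balls in $L^2_\rho(\R)$ are not compact and a function in $C_b(L^2_\rho(\R))$ need not be uniformly continuous on them. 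You instead establish sequential continuity: convergence in probability of $u(t,u_{0n})$ to $u(t,u_0)$, then the continuous mapping theorem (which, via the a.s.-convergent-subsequence characterization of convergence in probability, needs only plain continuity of $\phi$) and bounded convergence. Your route therefore proves the same statement (sequential continuity suffices for continuity on a metric space) while sidestepping the uniform-continuity issue; the paper's formulation buys a uniform modulus on bounded sets, but at the cost of that extra, unjustified hypothesis on $\phi$.
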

\begin{proof}
Denote ${B}(0,r)$ the closed ball $\{u(t,x)\in L_{\rho}^2(\R): \|u(t)\|_{2,\rho}\leq r\}$.
 Let $\phi\in C_{b}(L_{\rho}^2(\R))$.  It suffices to prove that for any $t>0$ and $r\in \mathbb{N}$,
\begin{equation}
\lim\limits_{\delta\to 0}\sup\limits_{u_{{01}},u_{{02}}\in B(0,r), \|u_{{01}}-u_{{02}}\|_{2,\rho}\leq \delta}|P_{t}\phi(t,u_{{01}})-P_{t}\phi(t,u_{{02}})|=0.
\end{equation}

For any $u_{{01}}, u_{02}\in {B}(0,r)$ and $N>r$,  as in Lemma \ref{con}, define
\begin{equation}
\tau_{N}^{i}:=\inf\{t\geq0:\|u(t,u_{0i})\|_{2,\rho}>N\}, i=1,2
\end{equation}
and
$$\tau_N=\tau_N^1\wedge\tau_N^2.$$
By inequality \eqref{tau}, we have
\begin{align}
&E|\phi(u(t,u_{0i}))-\phi(u(t\wedge\tau_{N},u_{0i}))| \\
&\leq 2|\phi|_{\infty}P(\tau_{N}<t)\nonumber\\
&\to 0  \qquad \text{as} \quad N\to +\infty. \nonumber
\end{align}
Then for any $\varepsilon>0$, choose  $ N>r$ sufficiently large such that for any $u_{0i}\in {B}(0,r),i=1,2$,
\begin{equation}\label{ph}
E|\phi(u(t,u_{0i}))-\phi(u(t\wedge\tau_{\hat N},u_{0i}))|\leq\varepsilon.
\end{equation}
For this $N$, since $\phi$ is uniformly continuous on ${B}(0,{\hat N})$, we can choose  $\eta>0$ such that for any $v,w\in {B}(0,{\hat N})$ with $\|v-w\|_{2,\rho}<\eta$,
we have
\begin{equation}\label{phi}
|\phi(v)-\phi(w)|\leq \varepsilon.
\end{equation}
By Lemma \ref{con}, we know that
\begin{equation}\label{ut}
\|u(t\wedge \tau_{\hat N},u_{01})-u(t\wedge \tau_{\hat N},u_{02})\|_{2,\rho}^2\leq c\|u_{01}-u_{02}\|_{2,\rho}^2.
\end{equation}
Thus for any $u_{01},u_{02}\in {B}(0,r)$ with $\|u_{01}-u_{02}\|_{2,\rho}^2<\dfrac{\delta^{2}\varepsilon}{2c|\phi|_{\infty}}$,  by inequality \eqref{phi} and inequality \eqref{ut}, we obtain
\begin{align*}
&E|\phi(u(t\wedge\tau_{\hat N},u_{01}))-\phi(u(t\wedge\tau_{\hat N},u_{02}))|\\
&=E\left[\phi(u(t\wedge\tau_{\hat N},u_{01}))-\phi(u(t\wedge\tau_{\hat N},u_{02}))I_{\{\|u(t\wedge\tau_{\hat N},u_{01})-u(t\wedge\tau_{\hat N},u_{02})\|_{2,\rho}\leq \delta\}} \right]\\
&+E\left[\phi(u(t\wedge\tau_{\hat N},u_{01}))-\phi(u(t\wedge\tau_{\hat N},u_{02}))I_{\{\|u(t\wedge\tau_{\hat N},u_{01})-u(t\wedge\tau_{\hat N},u_{02})\|_{2,\rho}> \delta\}} \right]\\
&\leq \varepsilon +2|\phi|_{\infty}P(\|u(t\wedge\tau_{\hat N},u_{01})-u(t\wedge\tau_{\hat N},u_{02})\|_{2,\rho}> \delta)\\
&\leq \varepsilon +2|\phi|_{\infty}\frac{1}{\delta^2}E(\|u(t\wedge\tau_{\hat N},u_{01})-u(t\wedge\tau_{\hat N},u_{02})\|_{2,\rho}^{2})\\
&\leq 2\varepsilon.
\end{align*}
According to inequality \eqref{ph} and inequality \eqref{ut}, we have
\begin{align*}
&E|\phi(u(t,u_{01}))-\phi(u(t,u_{02}))|\\
&\leq E|\phi(u(t,u_{01}))-\phi(u(t\wedge\tau_{\hat N},u_{01}))|+E|\phi(u(t\wedge\tau_{\hat N},u_{01}))-\phi(u(t\wedge\tau_{\hat N},u_{02}))|\\
&+E|\phi(u(t\wedge\tau_{\hat N},u_{02}))-\phi(u(t,u_{02}))|\\
&\leq 4\varepsilon,
\end{align*}
which completes the proof.
\end{proof}

\begin{prop}\label{s}
For $\rho(x)=e^{m|x|}$, $m>0$,  the restriction of semigroup $S(t)$  to $L^2_{\rho}$ is a $C_0$ semigroup. Moreover, let $\hat{\rho}(x)=e^{\hat{m}|x|}$. If $m<\hat{m}$, then for all $t>0$, $S(t)$ is compact from $L_{\hat{\rho}}^2(\R)$ to $L_{\rho}^2(\R)$.
\end{prop}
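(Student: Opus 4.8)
The plan is to treat the two assertions separately: the $C_0$ property by a boundedness-plus-density argument, and the compactness by conjugating to the unweighted space and exhibiting a Hilbert--Schmidt kernel.

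For the $C_0$ claim I would first record that $S(t)$ is bounded on $L^2_\rho$. Writing $S(t)\phi(x)=\int_\R G(t,x-y)\phi(y)\,dy$ and using $\int_\R G(t,x-y)\,dy=1$ together with Jensen's inequality gives $|S(t)\phi(x)|^2\le\int_\R G(t,x-y)|\phi(y)|^2\,dy$; multiplying by $\rho(x)$, integrating in $x$, applying Fubini and the convolution bound \eqref{Cr} (valid for $\rho(x)=e^{m|x|}$, using that $G$ is even so that $\int_\R G(t,x-y)\rho(x)\,dx=(G(t,\cdot)\ast\rho)(y)$) yields $\|S(t)\phi\|_{2,\rho}^2\le C_\rho(T)\|\phi\|_{2,\rho}^2$ for $t\in[0,T]$. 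The relations $S(0)=I$ and $S(t+s)=S(t)S(s)$ follow from $G(t)\ast G(s)=G(t+s)$ and associativity of convolution, justified by the integrability just obtained. It then remains to prove strong continuity at $0$. Since $M:=\sup_{t\in[0,1]}\|S(t)\|_{\mathcal L(L^2_\rho)}<\infty$ and $C_c^\infty(\R)$ is dense in $L^2_\rho(\R)$, the bound $\|S(t)\phi-\phi\|_{2,\rho}\le(M+1)\|\phi-\psi\|_{2,\rho}+\|S(t)\psi-\psi\|_{2,\rho}$ reduces matters to checking $\|S(t)\psi-\psi\|_{2,\rho}\to0$ for $\psi\in C_c^\infty(\R)$. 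For such $\psi$, supported in some $[-a,a]$, I would split the integral at a ball $B_R$ with $R>2a$: on $B_R$ the weight is bounded by $e^{mR}$ and one invokes the classical $L^2(\R)$-continuity of the heat semigroup, while on $B_R^c$ one has $\psi\equiv0$ and a direct Gaussian tail estimate shows $\int_{|x|>R}|S(t)\psi(x)|^2\rho(x)\,dx\to0$ as $t\to0^+$, because the factor $e^{-(|x|-a)^2/(4t)}$ decays super-exponentially and dominates both $t^{-1/2}$ and $e^{m|x|}$.

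For the compactness claim I would pass to the unweighted space. The maps $U_\rho\phi=\rho^{1/2}\phi$ and $U_{\hat\rho}\phi=\hat\rho^{1/2}\phi$ are isometric isomorphisms $L^2_\rho\to L^2$ and $L^2_{\hat\rho}\to L^2$, so $S(t):L^2_{\hat\rho}\to L^2_\rho$ is compact if and only if $T:=U_\rho\,S(t)\,U_{\hat\rho}^{-1}:L^2(\R)\to L^2(\R)$ is compact, since composing a compact operator with bounded ones preserves compactness. A direct computation shows $T$ is the integral operator $Tg(x)=\int_\R K(x,y)g(y)\,dy$ with
\[
K(x,y)=\rho^{1/2}(x)\hat\rho^{-1/2}(y)G(t,x-y)=e^{\frac{m}{2}|x|}e^{-\frac{\hat m}{2}|y|}\frac{1}{\sqrt{4\pi t}}e^{-\frac{|x-y|^2}{4t}}.
\]

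I would then show $K\in L^2(\R\times\R)$, so that $T$ is Hilbert--Schmidt and hence compact. Using $e^{m|x|}\le e^{m|x-y|}e^{m|y|}$ and the finiteness of $\int_\R e^{m|z|}e^{-z^2/(2t)}\,dz=:C'(t)$ (a variant of Lemma \ref{Gt}), the inner integral satisfies $\int_\R e^{m|x|}G(t,x-y)^2\,dx\le \frac{C'(t)}{4\pi t}e^{m|y|}$, whence
\[
\|K\|_{L^2(\R^2)}^2\le\frac{C'(t)}{4\pi t}\int_\R e^{(m-\hat m)|y|}\,dy=\frac{C'(t)}{4\pi t}\cdot\frac{2}{\hat m-m}<\infty,
\]
the final step using $m<\hat m$. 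This is precisely where the hypothesis $m<\hat m$ enters and is the crux of the argument: Gaussian smoothing makes the kernel square-integrable in each variable, while the strictly stronger decay of $\rho$ relative to $\hat\rho$ renders the leftover exponential integrable at infinity. I expect the main obstacle to be the bookkeeping of the weighted Gaussian integrals—correctly setting up the unitary conjugation and bounding $\int_\R e^{m|x|}G(t,x-y)^2\,dx$ so as to extract the factor $e^{m|y|}$—rather than any conceptual difficulty, with compactness following from the standard fact that every Hilbert--Schmidt operator is compact.
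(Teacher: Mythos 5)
Your proof is correct, and its core mechanism is the same as the paper's: compactness is obtained by showing $S(t)$ is Hilbert--Schmidt from $L^2_{\hat\rho}(\R)$ to $L^2_{\rho}(\R)$, with the hypothesis $m<\hat m$ entering precisely through the integrability of $\rho/\hat\rho$, i.e.\ $\int_\R e^{(m-\hat m)|x|}dx<\infty$. The difference is one of self-containedness. The paper disposes of the whole proposition by citing Proposition 2.1 of Tessitore--Zabczyk \cite{i} twice --- once for the boundedness of $S(t)$ on the weighted space (the $C_0$ property is asserted rather than argued) and once for the Hilbert--Schmidt criterion --- whereas you carry out the kernel computation explicitly after conjugating to unweighted $L^2$ via the isometries $\phi\mapsto\rho^{1/2}\phi$ and $\phi\mapsto\hat\rho^{1/2}\phi$, and you supply a complete density-plus-Gaussian-tail argument for strong continuity at $t=0$. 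All the individual estimates check out: the Jensen bound $|S(t)\phi(x)|^2\le (G(t,\cdot)\ast|\phi|^2)(x)$ combined with \eqref{Cr} gives the operator bound; the inequality $e^{m|x|}\le e^{m|x-y|}e^{m|y|}$ correctly extracts the factor $e^{m|y|}$ from $\int_\R e^{m|x|}G^2(t,x-y)\,dx$; and the leftover $\int_\R e^{(m-\hat m)|y|}dy=2/(\hat m-m)$ is exactly where $m<\hat m$ is needed. Your version is longer but verifiable without the external reference, which is arguably preferable here since the paper's appeal to \cite{i} leaves the reader to check that the cited proposition really covers the map between two \emph{different} weighted spaces.
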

\begin{proof}
By Proposition 2.1 of \cite{i}, for all $\phi\in L^2(\R)$, we have
$$\left\|S(t)\phi \right\|_{2,\hat\rho}^{2}\leq C_{\hat\rho}(T)\|\phi\|_{2,\hat\rho}^2.$$
Therefore the restriction of $S(t)$ to $L^2_{\hat\rho}(x)$ is a bounded linear map. Moreover, $S$ is a $C_0$ semigroup on $L^2_{\hat\rho}$.

If $m<\hat{m}$, then we have
 $$ \int_{\R}\frac{\rho(x)}{\hat\rho(x)}dx= \int_{\R}e^{(m-\hat{m})|x|}dx<+\infty.$$
 By Proposition 2.1 of \cite{i}, we also know that $S(t)$ is a Hilbert-Schmidt operator, therefore, it is compact from $L^2_{\hat\rho}$ to $L^2_{\rho}$.
\end{proof}

\begin{prop}\label{ss}
Assume that $0<m<\hat{m}$. Denote $\hat{S}(t)\psi(y):=\displaystyle\int_{\R}\frac{\partial{G(t,x-y)}}{\partial y}\psi(y)dy$. Then for all $t>0$, $\hat{S}(t)$ is compact from $L_{\hat{\rho}}^2(\R)$ to $L_{\rho}^2(\R)$.
\end{prop}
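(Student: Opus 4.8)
The plan is to prove that $\hat S(t)$ is a Hilbert--Schmidt operator from $L^2_{\hat\rho}(\R)$ to $L^2_\rho(\R)$ for each fixed $t>0$; since every Hilbert--Schmidt operator between Hilbert spaces is compact, this gives the assertion. This mirrors the treatment of $S(t)$ in Proposition \ref{s}, the only new feature being that the convolution kernel is now $\partial_y G(t,\cdot)$ rather than $G(t,\cdot)$, which carries the stronger bound \eqref{88}.

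First I would unweight both spaces. The maps $U_\rho\omega=\rho^{1/2}\omega$ and $U_{\hat\rho}\omega=\hat\rho^{1/2}\omega$ are isometric isomorphisms of $L^2_\rho(\R)$ and $L^2_{\hat\rho}(\R)$ onto $L^2(\R)$, so $\hat S(t)$ is compact (respectively Hilbert--Schmidt) if and only if the conjugated operator $U_\rho\,\hat S(t)\,U_{\hat\rho}^{-1}$ on $L^2(\R)$ is. A direct computation identifies the latter as the integral operator with kernel
\[
\tilde k(x,y)=\rho^{1/2}(x)\,\frac{\partial G}{\partial y}(t,x-y)\,\hat\rho^{-1/2}(y).
\]
By the standard Hilbert--Schmidt criterion it then suffices to verify $\tilde k\in L^2(\R\times\R)$, that is,
\[
\int_\R\int_\R \rho(x)\,\Big|\frac{\partial G}{\partial y}(t,x-y)\Big|^2\,\hat\rho^{-1}(y)\,dx\,dy<\infty ;
\]
finiteness here simultaneously guarantees that $\hat S(t)$ is well defined and bounded from $L^2_{\hat\rho}$ into $L^2_\rho$.

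For this estimate I would insert the Gaussian bound \eqref{88}, giving $|\partial_y G(t,x-y)|^2\le K^2 t^{-2}\exp(-2C_2|x-y|^2/t)$, where the prefactor $t^{-2}$ is a harmless finite constant for fixed $t>0$. Using $\rho(x)=e^{m|x|}$ together with the triangle inequality $|x|\le|x-y|+|y|$, I would bound the inner $x$-integral by
\[
\int_\R e^{m|x|}e^{-2C_2|x-y|^2/t}\,dx\le e^{m|y|}\int_\R e^{m|z|}e^{-2C_2|z|^2/t}\,dz=C(t)\,e^{m|y|},
\]
the last integral being finite since the Gaussian factor dominates. Substituting back, the surviving $y$-integral is $C(t)\int_\R e^{(m-\hat m)|y|}\,dy$, which converges precisely because $m<\hat m$. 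This closes the Hilbert--Schmidt bound and hence establishes compactness.

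The main obstacle, and the only place where the hypotheses are used, is this double integral: one must confirm that the Gaussian decay of the derivative kernel absorbs the exponential growth of $\rho$ \emph{uniformly in the base point} (which the shift $|x|\le|x-y|+|y|$ achieves) and that the gap $\hat m-m>0$ renders the residual exponential in $y$ integrable. In contrast to $S(t)$, the derivative kernel contributes an extra factor $t^{-1}$, so $C(t)$ blows up as $t\downarrow 0$; this is immaterial since compactness is claimed only for each fixed $t>0$, but it does mean the estimate is not uniform down to $t=0$.
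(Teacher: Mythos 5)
Your proposal is correct and follows essentially the same route as the paper: both arguments establish compactness by verifying the Hilbert--Schmidt criterion, which after unweighting reduces to the square-integrability of $\rho^{1/2}(x)\,\partial_y G(t,x-y)\,\hat\rho^{-1/2}(y)$, controlled via the Gaussian bound \eqref{88}, the shift $|x|\le|x-y|+|y|$ (the content of Lemma \ref{Gt}), and the integrability of $e^{(m-\hat m)|y|}$ for $m<\hat m$. The paper phrases the computation as a sum over the orthonormal basis $\{e_i/\sqrt{\hat\rho}\}$ of $L^2_{\hat\rho}(\R)$ rather than conjugating by unitaries, but this is only a cosmetic difference.
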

\begin{proof}
The proof is similar to proposition \ref{s}. It is sufficient to prove that $\hat{S}(t)$ is a Hilbert-Schmidt operator. Let $\{e_i\}$ be an orthonormal basis in $L^2$, then we have
\begin{align*}
\sum_{i=1}^{\infty} \big\|\hat{S}(t)\frac{e_{i}}{\sqrt{\hat\rho}}\big\|_{2,\rho}^{2}
&=\sum_{i=1}^{\infty}\int_{\R}\rho(x)\left(\int_{\R}\frac{\partial{G(t,x-y)}}{\partial y}\frac{|e_{i}(y)|}{\sqrt{\hat\rho(y)}}dy\right)^2dx\\
&\leq Ct^{-\frac{1}{2}}\int_{\R}\int_{\R}\frac{\partial{G}}{\partial y}(t,x-y)\hat{\rho}^{-1}(y)\rho(x)dxdy\\
&\leq C_{\hat\rho}(t)t^{-1}\int_{\R}\hat{\rho}^{-1}(x)\rho(x)dx\\
&=C_{\hat\rho}(t)t^{-1}\int_{\R}e^{(m-\hat{m})|x|}dx<+\infty.
\end{align*}
The proof is complete.
\end{proof}
\begin{lem}\label{Mal}\rm{(\cite{i}, Theorem 3.1 (Step 1))}.
 Let $q>1$, $\alpha\in(\frac{1}{q},1]$ and $\phi\in L^{q}(0,1;L_{\hat\rho}^2(\R))$. Denote \\
\centerline {$M_{\alpha}\phi:=\displaystyle\int_{0}^{1}(1-s)^{\alpha-1}S(1-s)\phi(s)ds,$}
\centerline{$\hat M_{\alpha}\phi:=\displaystyle\int_{0}^{1}(1-s)^{\alpha-1}\hat S(1-s)\phi(s)ds.$}
Then $M_{\alpha}$ and $\hat M_{\alpha}$ are both compact operators from $L^{q}(0,1;L_{\hat\rho}^2(\R))$ to $L^2_{\rho}(\R)$.
\end{lem}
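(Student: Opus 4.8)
The plan is to use the factorization (dilation) method of Da Prato--Zabczyk, exploiting that the only compactness input required is already supplied by Propositions~\ref{s} and \ref{ss}: for every fixed $t>0$ the maps $S(t)$ and $\hat S(t)$ are compact from $L^2_{\hat\rho}(\R)$ to $L^2_\rho(\R)$. The operators $M_\alpha$ and $\hat M_\alpha$ are integral averages of these compact maps against the kernel $(1-s)^{\alpha-1}$, and the strategy is to cut off the singularity at $s=1$, prove the truncated operators are compact by a semigroup factorization, and recover $M_\alpha,\hat M_\alpha$ as operator-norm limits of compact operators.

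First I would record boundedness. Since $m<\hat m$ gives $\rho\le\hat\rho$ and hence $\|\cdot\|_{2,\rho}\le\|\cdot\|_{2,\hat\rho}$, Proposition~\ref{s} yields $\|S(t)\psi\|_{2,\rho}\le C\|\psi\|_{2,\hat\rho}$ uniformly for $t\in[0,1]$, while Proposition~\ref{ss} together with the derivative bound \eqref{88} yields $\|\hat S(t)\psi\|_{2,\rho}\le C\,t^{-1/2}\|\psi\|_{2,\hat\rho}$. Combining these with H\"older's inequality in the $s$-variable (conjugate exponents $q,q'$), boundedness of $M_\alpha$ reduces to $\int_0^1(1-s)^{(\alpha-1)q'}ds<\infty$, which holds precisely because $\alpha>\tfrac1q$; for $\hat M_\alpha$ the extra factor $(1-s)^{-1/2}$ enters and one must integrate $(1-s)^{(\alpha-3/2)q'}$ near $s=1$.

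Next, for $\varepsilon\in(0,1)$ set $M_\alpha^\varepsilon\phi:=\int_0^{1-\varepsilon}(1-s)^{\alpha-1}S(1-s)\phi(s)\,ds$ and $R_\varepsilon:=M_\alpha-M_\alpha^\varepsilon$. The same H\"older estimate restricted to $[1-\varepsilon,1]$ shows that $R_\varepsilon\to0$ in operator norm as $\varepsilon\to0$, since it is the tail of a convergent integral. To see that $M_\alpha^\varepsilon$ is compact I would use the semigroup identity $S(1-s)=S(\varepsilon/2)\,S(1-s-\varepsilon/2)$, valid with $1-s-\varepsilon/2\ge\varepsilon/2>0$ on the truncated range, to factor $M_\alpha^\varepsilon=S(\varepsilon/2)\circ N_\varepsilon$, where $N_\varepsilon\phi:=\int_0^{1-\varepsilon}(1-s)^{\alpha-1}S(1-s-\varepsilon/2)\phi(s)\,ds$ maps $L^q(0,1;L^2_{\hat\rho}(\R))$ boundedly into $L^2_{\hat\rho}(\R)$ (the operators $S(1-s-\varepsilon/2)$ being uniformly bounded on $L^2_{\hat\rho}$ by Proposition~\ref{s}). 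Since $S(\varepsilon/2)\colon L^2_{\hat\rho}\to L^2_\rho$ is compact, the composition $M_\alpha^\varepsilon$ is compact, and as compact operators are closed under operator-norm convergence, $M_\alpha=\lim_{\varepsilon\to0}M_\alpha^\varepsilon$ is compact.

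For $\hat M_\alpha$ I would run the identical argument, replacing the factorization by $\hat S(1-s)=\hat S(\varepsilon/2)\,S(1-s-\varepsilon/2)$, which follows because $\hat S(t)=\pm\partial_x S(t)$ commutes with $S$, so that $\hat M_\alpha^\varepsilon=\hat S(\varepsilon/2)\circ N_\varepsilon$ is compact via the compactness of $\hat S(\varepsilon/2)$ from Proposition~\ref{ss}. The main obstacle is entirely concentrated at the endpoint $s=1$: for $\hat M_\alpha$ the integrand carries the additional singularity $(1-s)^{-1/2}$ coming from \eqref{88}, so the tail estimate controlling $\|R_\varepsilon\|$ is genuinely more delicate than for $M_\alpha$ and is what dictates the admissible quantitative relation between $\alpha$ and $q$; by contrast, once the factorization is in place the compactness of each truncated piece is automatic.
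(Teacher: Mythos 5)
The paper gives no proof of this lemma at all: it is quoted from Tessitore--Zabczyk \cite{i}, whose Theorem 3.1 (Step 1) in fact only treats the operator $M_\alpha$ (their equation has no convective term), so the $\hat M_\alpha$ half is asserted here without argument. Your truncation-plus-factorization scheme is exactly the standard way to prove such statements, and for $M_\alpha$ it is complete and correct: the H\"older exponent count $(\alpha-1)q'>-1\iff\alpha>\frac1q$ matches the hypothesis, the identity $S(1-s)=S(\varepsilon/2)S(1-s-\varepsilon/2)$ on $[0,1-\varepsilon]$ is legitimate, $N_\varepsilon$ is bounded into $L^2_{\hat\rho}$ by Proposition \ref{s}, $S(\varepsilon/2)$ is compact into $L^2_\rho$, and the tail $R_\varepsilon$ vanishes in operator norm because it is the tail of a convergent integral.

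The gap is in the $\hat M_\alpha$ half, and you half-notice it without resolving it. Your own computation shows that boundedness of $\hat M_\alpha$ (and the norm convergence $\hat R_\varepsilon\to0$) requires $\int_0^1(1-s)^{(\alpha-3/2)q'}\,ds<\infty$, i.e.\ $\alpha>\frac12+\frac1q$; for $\alpha$ near $\frac1q$ this integral diverges, so for the range $\alpha\in(\frac1q,1]$ stated in the lemma your argument does not close. This cannot be repaired within your scheme, because the blow-up $\|\hat S(t)\|_{L^2_{\hat\rho}\to L^2_\rho}\sim t^{-1/2}$ coming from \eqref{88} is sharp for the derivative of the heat kernel, so $\hat M_\alpha$ is genuinely more singular than $M_\alpha$ and the lemma as written is at best imprecise about the admissible $\alpha$ for $\hat M_\alpha$. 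You should state the condition $\alpha>\frac12+\frac1q$ explicitly rather than gesturing at "the admissible quantitative relation"; note that this restricted range is all the paper ever uses, since the proof of Theorem \ref{55} applies $\hat M_\alpha$ only with $\alpha=1$ and $q>2$ (indeed $q>4$, forced by Lemma \ref{Yt}), where your argument goes through.
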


As a consequence, for any $r>0$ and $\alpha \in (\frac{1}{q},1]$,\\
\centerline{$K(r)=\{S(1)u_{0}+M_{1}\phi_{1}+\hat{M}_{1}\phi_{2}+M_{\alpha}\phi_{3}:\|u_{0}\|_{2,\hat{\rho}}<r, \|\phi_{i}\|_{L^{q}([0,1];L^2_{\hat{\rho}})}<r, i=1,2,3\}$}
is relatively compact in $L_{\rho}^2(\R)$.\par
\begin{lem}\label{Yt}
 For $\alpha\in (1/q, 1/4)$, we define
 $$Y_{\alpha}(t,x):=\int_{0}^{t}\int_{\R}(t-s)^{-\alpha}G(t-s,x-y)\sigma(u(s,y))W(ds,dy).$$ Then we have
\begin{equation}\label{t}
E\|k|u(t)|u(t)\|_{L^{q}(0,1;L^2_{\hat{\rho}})}^{q}=E\int_{0}^{1}\|k|u(t)|u(t)\|_{2,\hat{\rho}}^{q}dt\leq C\left(1+\|u_{0}\|_{2,\hat{\rho}}^q\right)
\end{equation}
and
\begin{equation}\label{x}
E\|Y_{\alpha}(t)\|_{L^{q}(0,1;L^2_{\hat{\rho}})}^{q}=\int_{0}^{1}\|Y_{\alpha}(t)\|_{2,\hat{\rho}}^{q}dt\leq \hat C,
\end{equation}
where $C$ and $\hat C$ are positive constants.
\end{lem}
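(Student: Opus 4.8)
The plan is to treat the two estimates \eqref{t} and \eqref{x} separately, since they rely on different tools: \eqref{t} is a deterministic, pointwise-in-time bound on the nonlinear damping term that is then fed into the moment estimate of Theorem \ref{solution}, whereas \eqref{x} is a direct stochastic estimate of exactly the type already carried out in Lemma \ref{eta2}, now run with the weight $\hat\rho$ and exponent $q$.

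For \eqref{t}, I would first rewrite the norm of the damping term as $\|k|u(t)|u(t)\|_{2,\hat\rho}=k\|u(t)\|_{4,\hat\rho}^2$, using that $\int_\R |u|^2|u|^2\hat\rho\,dx=\|u\|_{4,\hat\rho}^4$. Then I would invoke the embedding $L^2_{\hat\rho}(\R)\subset L^\infty(\R)$ recorded in the Preliminaries to control the quartic weighted norm by the quadratic one: from $\|u\|_{4,\hat\rho}^4\le\|u\|_\infty^2\|u\|_{2,\hat\rho}^2$ together with $\|u\|_\infty\le C\|u\|_{2,\hat\rho}$ one gets $\|u\|_{4,\hat\rho}^2\le C\|u\|_{2,\hat\rho}^2$, hence $\|k|u(t)|u(t)\|_{2,\hat\rho}^q\le C\|u(t)\|_{2,\hat\rho}^{2q}$. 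Integrating over $t\in[0,1]$, taking expectations, bounding the time integral by the supremum, and applying the moment bound of Theorem \ref{solution} (with $T=1$ and weight $\hat\rho$) then yields the right-hand side of \eqref{t}.

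For \eqref{x}, I would observe that $Y_\alpha$ is precisely the auxiliary process $Y$ appearing in the proof of Lemma \ref{eta2}. Applying Burkholder's inequality and the bound $|\sigma(x,r)|\le b(x)$ from $(A1)$ gives
$$E\|Y_\alpha(t)\|_{2,\hat\rho}^q\le\left(\int_\R\int_0^t\int_\R (t-s)^{-2\alpha}G^2(t-s,x-y)b^2(y)\,dy\,ds\,\hat\rho(x)\,dx\right)^{q/2}.$$
Since $G^2(\tau,\cdot)$ is, up to the factor $\tau^{-1/2}$, of the form $\tilde G(\tau,\cdot)$ with $a=2$, Lemma \ref{Gt} yields $\int_\R G^2(t-s,x-y)\hat\rho(x)\,dx\le C(t-s)^{-1/2}e^{\hat m^2(t-s)/2}\hat\rho(y)$, and on $[0,1]$ the exponential is bounded. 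This reduces the inner integral to $C\|b\|_{2,\hat\rho}^2\int_0^t(t-s)^{-2\alpha-1/2}\,ds$, which is finite uniformly in $t\in[0,1]$ precisely because $\alpha<\tfrac14$. Thus $E\|Y_\alpha(t)\|_{2,\hat\rho}^q\le\hat C$ uniformly in $t$, and integrating over $[0,1]$ gives \eqref{x}.

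The routine parts are the Burkholder and Lemma \ref{Gt} manipulations, which merely repeat Lemma \ref{eta2}. The point requiring care is \eqref{t}: one must check that the weight $\hat\rho=e^{\hat m|x|}$ indeed falls under the hypotheses of Theorem \ref{solution} (it does, since it satisfies \eqref{Cr} and the estimate of Lemma \ref{Gt}), and one must track the power of $\|u_0\|_{2,\hat\rho}$ produced by the embedding step, which forces the use of the $2q$-th moment bound in Theorem \ref{solution}. The constraint $\alpha>\tfrac1q$ plays no role in these two estimates themselves; it is needed only later, when $Y_\alpha$ is combined with the compactness Lemma \ref{Mal} through the factorization identity.
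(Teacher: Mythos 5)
Your proposal is correct and follows essentially the same route as the paper: for \eqref{t} the paper likewise reduces $\|k|u(t)|u(t)\|_{2,\hat\rho}$ to a power of $\|u(t)\|_{2,\hat\rho}$ via the embeddings stated in the Preliminaries and then invokes Theorem \ref{solution}, and for \eqref{x} it runs exactly your Burkholder--$(A1)$--Lemma \ref{Gt} computation with $\alpha<1/4$ giving integrability of $(t-s)^{-2\alpha-1/2}$. Your bookkeeping of the exponent (ending with $\|u_0\|_{2,\hat\rho}^{2q}$ and hence the $2q$-th moment bound of Theorem \ref{solution}) is in fact more careful than the paper's displayed chain, which drops a factor of $2$ in the exponent at the step $\bigl(k^2\|u\|_{4,\hat\rho}^4\bigr)^{q/2}=k^q\|u\|_{4,\hat\rho}^{2q}$.
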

\begin{proof}
By Theorem \ref{solution}, we have
\begin{align*}
E\|k|u(t)|u(t)\|_{L^{q}(0,1;L^2_{\hat{\rho}})}^{q}&=E\int_{0}^{1}\left(\int_{\R}(k|u(t,x)|^2)^{2}\hat{\rho}(x)dx\right)^{q/2}dt\\
&\leq k^{q} \int_0^1 E \|u(t)\|_{4,\hat{\rho}}^qdt\leq  k^{q} \int_0^1 E \|u(t)\|_{2,\hat{\rho}}^qdt\leq C\left(1+\|u_{0}\|_{2,\hat{\rho}}^q\right).
\end{align*}
Applying Burkholder's inequality, Young's inequality, condition $(A1)$ and $\alpha<1/4$, we get
\begin{align*}
E\|Y_{\alpha}(t)\|_{L^{q}(0,1;L^2_{\hat\rho})}^{q}&=E\int_{0}^{1}\left(\int_{\R}|Y_{\alpha}(t,x)|^{2}\hat{\rho}(x)dx\right)^{q/2}dt\\
&\leq C E\int_{0}^{1} \left(\int_{\R}\int_{0}^{t}(t-s)^{-2\alpha}\int_{\R}G^{2}(t-s,x-y)\sigma^{2}(u(s,y))dyds\hat{\rho}(x)dx\right)^{\frac{q}{2}}dt\\
&\leq C \int_0^1\left( \int_0^t(t-s)^{-2\alpha}(t-s)^{-\frac{1}{2}}e^{\frac{\hat{m}^2(t-s)}{2}}\|b\|_{2,\hat\rho}^2ds\right)^{\frac{q}{2}}dt\\
&\leq C \|b\|_{2,\hat\rho}^q e^{\frac{\hat{m}^2q}{4}}\int_0^1\left(\int_0^t (t-s)^{-2\alpha-\frac{1}{2}}e^{-\frac{\hat{m}^2s}{2}}ds \right)^{\frac{q}{2}}dt\\
&\leq C \|b\|_{2,\hat\rho}^q e^{\frac{\hat{m}^2q}{4}}\left(\int_0^1 t^{-2 \alpha-\frac{1}{2}}dt\right)^{\frac{q}{2}} \int_0^1 e^{-\frac{\hat{m}^2qt}{4}}dt\leq \hat{C}.
\end{align*}
This completes the proof of Lemma \ref{Yt}.
\end{proof}
{\bf Proof of Theorem \ref{55}.}
\begin{proof}
By employing the factorization formula, we have
\begin{equation}
u(1,u_{0})=S(1)u_{0}-M_{1}k|u(t)|u(t)+\frac{1}{2}\hat M_{1}u^2(t)+\displaystyle\frac{\sin\alpha\pi}{\pi}M_{\alpha}Y_{\alpha}(t).
\end{equation}
By Chebyshev's inequality, together with inequality \eqref{t} and inequality \eqref{x}, we deduce that there exists a constant $C>0$ such that for any $r>0$ and all $\|u_0\|_{2,\hat\rho}\leq r$,
\begin{align*}
&P(u(1,u_{0})\notin K(r))\\
\leq& P\left\{\|k|u(t)|u(t)\|_{L^{q}(0,1;L^2_{\hat{\rho}})}>r\right\}+P\left\{\|u^2(t)\|_{L^{q}(0,1;L^2_{\hat{\rho}})}>r\right\}+P\left\{\|Y_{\alpha}(t)\|_{L^{q}(0,1;L^2_{\hat{\rho}})}>\frac{\pi r}{\sin\alpha\pi}\right\}\\
\leq & \dfrac{1}{r^q}E\|k|u(t)|u(t)\|_{L^{q}(0,1;L^2_{\hat{\rho}})}^{q}+\dfrac{1}{r^q}E\|u^2(t)\|_{L^{q}(0,1;L^2_{\hat{\rho}})}^{q}
+\dfrac{\sin^{q}\alpha\pi}{(\pi r)^q}E\| Y_\alpha(t)\|_{L^{q}(0,1;L^2_{\hat{\rho}})}^q\\
\leq&C r^{-q}\big(1+\|u_{0}\|_{2,\hat{\rho}}^{q}\big).
\end{align*}
For any $t>1$, by the proof of Theorem 6.1.2 in \cite{n}, for $r>r_1>0$, we have
\begin{equation}\label{Kr}
\begin{aligned}
P(u(t)\in K(r))\geq\big(1-C r^{-q}(1+r_{1}^{q})\big)P\big(\|u(t-1)\|_{2,\hat{\rho}}\leq r_{1}\big).
\end{aligned}
\end{equation}
By integrating both sides of inequality \eqref{Kr} from 0 to $T$, we get
$$
\frac{1}{T}\int_{1}^{T+1}P\left(u(t)\in K(r)\right)dt
\geq\big(1-Cr^{-q}(1+r_{1}^{q})\big)\frac{1}{T}\int_{0}^{T}P\big(\|u(t)\|_{2,\hat{\rho}}\leq r_{1}\big)dt,\quad T\geq 1.
$$
Taking first $r_1$ and then $r>r_{1}$ be sufficiently large, combining Theorem \ref{77}, we obtain that\\
\centerline{$\dfrac{1}{T}\displaystyle\int_{1}^{T+1}\mathcal{L}(u(t))dt=\dfrac{1}{T}\displaystyle\int_{1}^{T+1}P\left(u(t)\in K(r)\right)dt$} is tight.
Hence there exists a sequence $ T_{n} \to +\infty$ such that\\
\centerline{$\dfrac{1}{T_{n}}\displaystyle\int_{1}^{T_{n}+1}\mathcal{L}(u(t))dt$}
converges weakly to $\mu$. By the Krylov-Bogolioubov theorem, $\mu$ is invariant for equation \eqref{2.1} in $L^2_{\rho}(\R)$. The proof is complete.
\end{proof}

\section*{Acknowledgements}
This work is supported by National Key R\&D Program of
China (No. 2023YFA1009200), NSFC (Grant 11925102), and LiaoNing Revitalization Talents Program (Grant XLYC2202042).

\end{document}